\newcommand{\Z}{{\mathbb Z}}
\newcommand{\Q}{{\mathbb Q}}
\newcommand{\F}{{\mathbb F}}
\newcommand{\gl}{\text{GL}_2}
\newcommand{\tr}{\text{tr }}
\newtheorem{thm}{Theorem}[section]
\newtheorem{theorem}[thm]{Theorem}
\newtheorem{corollary}[thm]{Corollary}
\newtheorem{lemma}[thm]{Lemma}	
\newtheorem{proposition}[thm]{Proposition}
\newtheorem{conjecture}[thm]{Conjecture}
\theoremstyle{definition}
\newtheorem{definition}[thm]{Definition}
\theoremstyle{remark}
\newtheorem*{lemma*}{Lemma}
\numberwithin{equation}{section}
\title{Distribution of squarefree values of sequences associated with elliptic curves}
\author{Shabnam Akhtari}
\author{Chantal David}
\author{Heekyoung Hahn}
\author{Lola Thompson}
\address{University of Oregon\\
Department of Mathematics\\
Fenton Hall\\
Eugene, OR 97403\\
United States}
\email[] {akhtari@uoregon.edu}
\address{Concordia University\\
Department of Mathematics and Statistics\\
1455 de Maisonneuve West\\
Montr\'eal, Qu\'ebec\\
Canada H3G 1M8}
\email[] {cdavid@mathstat.concordia.ca}
\address{Duke University\\
Department of Mathematics\\
Box 90320\\
Durham, NC 27708\\
United States}
\email[] {hahn@math.duke.edu}
\address{University of Georgia\\
Department of Mathematics\\
Boyd Graduate Studies Research Center\\
Athens, GA 30602\\
United States}
\email[] {lola@math.uga.edu}
\begin{document}

\begin{abstract}

Let $E$ be a non-CM elliptic curve defined over $\mathbb{Q}$. For each prime $p$ of good reduction, $E$ reduces to a curve $E_p$ over the finite field $\mathbb{F}_p$. For a given squarefree polynomial $f(x,y)$, we examine the sequences $f_p(E) := f(a_p(E), p)$, whose values are associated with the reduction of $E$ over $\F_p$. We are particularly interested in two sequences: $f_p(E) =p + 1 - a_p(E)$ and $f_p(E) = a_p(E)^2 - 4p$. We present two results towards the goal of determining how often the values in a given sequence are squarefree. First, for any fixed curve $E$, we give an upper bound for the number of primes $p$ up to $X$ for which $f_p(E)$ is squarefree. Moreover, we show that the
conjectural asymptotic for the prime counting function $$\pi_{E,f}^{SF}(X) := \#\{p \leq X: f_p(E) \ \hbox{is squarefree}\}$$ is consistent with the asymptotic for the average over curves $E$ in a suitable box.

\end{abstract}

\maketitle

\section{Introduction}

Let $E$ be an elliptic curve over $\Q$. For each prime $p$ of good reduction, $E$ reduces to a curve $E_p$ over the finite field $\F_p$ with $|E_p(\F_p)| = p + 1 - a_p(E)$ and $|a_p(E)| \leq 2 \sqrt{p}$
(the Hasse bound). There are many open conjectures about the distribution of invariants associated with the
reductions of a fixed elliptic curve over $\Q$ to curves over the finite fields $\F_p$ as $p$ runs through the primes; the conjecture
of Lang and Trotter~\cite{lt} and the conjecture of
Koblitz~\cite{koblitz} are two well-known examples.
The Koblitz Conjecture concerns the number of primes $p\le X$ such that $|E(\F_p)|$ is prime, and
is thus analogous to the twin prime conjecture in the context of elliptic curves.
The fixed trace Lang-Trotter Conjecture concerns the number of primes $p\le X$ such that
the trace of Frobenius $a_p(E)$ is equal to a fixed integer $t$.  Another
conjecture of Lang and Trotter (also called the Lang-Trotter Conjecture)
concerns the number of primes
$p\le X$ such that the Frobenius field $\Q(\sqrt{a_p(E)^2-4p})$ is a fixed
imaginary quadratic field $K$. These conjectures are still completely open. In particular,
the only known lower bound for any of the conjectures described above is a result of Elkies \cite{elkies},
who proved that there are infinitely many supersingular primes (or equivalently, infinitely many
primes such that $a_p(E)=0$).

In this paper, we consider the question of counting the squarefree values in a sequence associated to the reductions $E_p$
 over the finite fields $\F_p$ of a fixed elliptic curve $E$ defined over $\Q$.
Two sequences are of particular interest (and were studied in previous work), namely
$|E_p(\F_p)| = p + 1 - a_p(E)$ and
$a_p(E)^2 - 4p$. The latter sequence is of interest since $\Z [ \sqrt{a_p(E)^2 - 4p} ]$ is the ring generated by the Frobenius element over $\F_p$; thus, it is related to the second conjecture of Lang and Trotter discussed above.

In general,
let $f(x,y) \in \Z[x,y]$ be squarefree. We consider the general sequence
$$\left\{ f_p(E) := f(a_p(E), p) \;:\; p \;\mbox{prime} \right\}$$
associated to a given elliptic curve $E$ over $\Q$.

We define
$$\pi_{E, f}^{SF}(X) := \# \{p \leq X : f_p(E) \ \hbox{is squarefree}\}.$$

It is not difficult to predict the precise asymptotic that one should obtain for
$\pi_{E, f}^{SF}(X)$ but the precise order of $\pi_{E, f}^{SF}(X)$ is not
known unconditionally for any sequence $f_p(E)$. If $E$ is a non-CM elliptic curve defined over $\mathbb{Q}$, then
 assuming the Generalized Riemann Hypothesis, the Pair Correlation Conjecture, and Artin Holomorphy Conjecture, Cojocaru showed in her thesis \cite{cojocaru-thesis} how to obtain the correct asymptotic
for $\pi_{E, f}^{SF}(X)$ when $f_p(E) = p + 1 - a_p(E)$. Her proof presumably extends to
other sequences.
For elliptic curves with complex multiplication, Cojocaru \cite{cojocaru} obtained the correct proportion of primes $p$ for which the sequence $p+1-a_p(E)$ is squarefree. Her asymptotic estimate relies heavily on the algebraic properties that CM elliptic curves possess; the same methods do not appear to be capable of handling the non-CM case. For
CM curves,
 handling the sequence $a_p(E)^2 - 4p$ requires a different approach, as computing the proportion of primes for which $a_p(E)^2 - 4p$ is squarefree is equivalent to counting the number of primes in a given quadratic progression. For example, let $E$ be the CM elliptic
curve $y^2 = x^3 - x$ with complex multiplication by the ring of Gaussian integers $\Z[i]$. Let $p$ be an ordinary prime that is congruent to $1$ modulo $4$. Since $E$ has rational 2-torsion, then $a_p(E)$ is even and $4$
divides $a_p(E)^2 - 4p$. We want to know when $(a_p(E)^2 - 4p)/4$ is squarefree.
Since $E$ has complex multiplication by $\Z[i]$, if $a_p(E) \neq 0$, then $a_p(E)^2 - 4p = -4 \alpha^2$
for some $\alpha \in \Z$,  and $(a_p(E)^2 - 4p)/4$ is squarefree
if and only if $\alpha = 1$ if and only if $p = (a_p(E)/2)^2 + 1.$
This latter problem remains a well-known open question.

To gain evidence for conjectures related to the distribution of invariants associated with the
reductions of a fixed elliptic curve over the finite fields $\F_p$, it is natural to consider the averages for these conjectures over
some family of elliptic curves.
This has been done by various authors originating with the work of Fouvry and
Murty \cite{FM} for the number of supersingular primes (i.e., the fixed trace Lang-Trotter
Conjecture for $t=0$).  See \cite{DP99}, \cite{DP04}, \cite{Jam04}, \cite{BBIJ05}, \cite{JS11}, and \cite{CFJ+11}
for other averages regarding the fixed trace Lang-Trotter Conjecture.
The average order for the Koblitz Conjecture was considered in \cite{BaCoDa}.
Very recently, the average has been successfully carried out for the Lang-Trotter Conjecture
on Frobenius fields \cite{CoIwJo}. In \cite{du}, the authors considered the average
of $\pi_{E, f}^{SF}(X)$ for $f_p(E) = a_p(E)^2 - 4p$ and showed that the conjecture holds on average
when the size of the family is large enough. This is equivalent to determining the
average over the finite fields $\F_p$, namely
$
\sum_{p \leq X} \# \left\{ E / \F_p \;:\; a_p(E)^2 - 4p \quad \text{is squarefree} \right\}.
$
For the sequence $f_p(E) = p+1-a_p(E)$,
the number of squarefree values was also
investigated over the finite fields $\F_p$ for $p \leq X$ by Gekeler \cite{gekeler}.
As a corollary to his result, one can show that the number of primes $p \leq X$  such that
$p+1-a_p(E)$ is squarefree follows the predicted asymptotic on average over all elliptic curves.

All of the aforementioned averages provide evidence for the stated conjectures, as they demonstrate that the average
asymptotic is on the same order of magnitude as the conjectured asymptotic for any given elliptic curve. In each case, the average asymptotic involves
a constant, which depends on the precise conjecture that is averaged, but does not necessarily correspond
to the constant that appears in the conjecture for every elliptic curve. It is therefore interesting to investigate
whether the average results are compatible with the corresponding conjectures at the level of the constants,
i.e., whether the average of the conjectured constants is equivalent to the constant obtained via the average conjecture.
This was done by Jones \cite{jones} for both the Lang-Trotter conjecture and the Koblitz conjecture. In this paper, we show that the same principle holds for the constants associated with the number of squarefree values of $f_p(E)$. Precise statements of our results are given in the next section.

\section{Statement of results}

It is not difficult to obtain an upper bound of the correct order of magnitude for
$\pi_{E, f}^{SF}(X)$ using the M\"{o}bius function
to detect squares, along with an explicit version of the Chebotarev Density theorem to count
$\# \left\{ p \leq X \;:\; d^2 \mid f_p(E) \right\}.$ Furthermore, one gets
the correct order of magnitude with the correct conjectural constant. In order to
give an expression for this constant, we need some definitions.
Let
$f(x,y) \in \Z[x,y]$ be squarefree.
Let
\begin{eqnarray} \label{def-cf}
C_{f}(n) &=& \{ g \in   \mbox{GL}_2(\Z / n \Z) : f(\tr{g},\det{g}) \equiv 0 \mod{n}\}.
\end{eqnarray}
For any elliptic curve $E$ over $\Q$, and any positive
integer $n$,  let $G_E(n)$ be the subgroup of
$\mbox{GL}_2(\Z / n \Z)$ defined
in Section \ref{torsionfields},
and let $M_E$ be the integer defined in Section \ref{serre-curves}.
We then define
\begin{eqnarray} \label{def-cef}
C_{E, f}(n) &=& \{ g \in G_E(n) : f(\tr{g}, \det{g}) \equiv 0 \mod{n}\}.
\end{eqnarray}
Then,
\begin{eqnarray}
\label{defCE}
C_{E,f}^{SF} = \prod_{\ell \nmid M_E}\left(1 - \frac{|C_{f}(\ell^2)|}{|\mathrm{GL}_2(\Z/\ell^2\Z)|}\right) \sum_{n \mid M_E} \mu(n) \frac{|C_{E,f}(n^2)|}{|G_E(n^2)|}. \end{eqnarray}

Our first result is the following:

\begin{theorem}\label{upperbound}  Let $E$ be a non-CM elliptic curve defined over $\mathbb{Q}$. For $X$ sufficiently large (depending on $E$), and any $\varepsilon> 0$,
we have $$\pi_{E, f}^{SF}(X) \leq C_{E,f}^{SF} \; \pi(X)\left(1 + O\left(\frac{1}{(\log \log X)^{1-\varepsilon}}\right)\right),$$ where $C_{E,f}^{SF}$
is the constant given in \eqref{defCE}. \end{theorem}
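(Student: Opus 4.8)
The plan is to use the Möbius function to detect squarefreeness, writing
$$\pi_{E,f}^{SF}(X) = \sum_{p \le X} \sum_{d^2 \mid f_p(E)} \mu(d),$$
and then to split the inner sum over $d$ at appropriate ranges. First I would note that for a fixed prime $\ell$, the condition $\ell^2 \mid f_p(E) = f(a_p(E),p)$ is a Chebotarev-type condition: since $p$ and $a_p(E)$ are determined modulo $\ell^2$ by the image of Frobenius in $G_E(\ell^2) \subseteq \mathrm{GL}_2(\Z/\ell^2\Z)$, the primes $p$ with $\ell^2 \mid f_p(E)$ are exactly those whose Frobenius lands in $C_{E,f}(\ell^2)$ (up to the usual caveat that $p \nmid \ell$, i.e. $\ell$ ramified cases, which contribute $O(1)$). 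Here I would invoke the effective Chebotarev Density Theorem — unconditionally, via the Lagarias–Odlyzko/Serre bounds, which suffice only for $d$ up to a small power of $\log X$ — to get, for squarefree $d$ coprime to $M_E$ and lying in a suitable range,
$$\#\{p \le X : d^2 \mid f_p(E)\} = \frac{|C_{E,f}(d^2)|}{|G_E(d^2)|}\,\pi(X) + (\text{error}),$$
using multiplicativity (CRT) to factor $|C_{E,f}(d^2)|/|G_E(d^2)|$ over the primes $\ell \mid d$, and the fact that for $\ell \nmid M_E$ the group $G_E(\ell^2)$ is all of $\mathrm{GL}_2(\Z/\ell^2\Z)$, so the local factor is $|C_f(\ell^2)|/|\mathrm{GL}_2(\Z/\ell^2\Z)|$. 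This is where the shape of the constant $C_{E,f}^{SF}$ in \eqref{defCE} comes from.

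Next I would handle the three ranges of $d$ separately. For small $d$ (say $d \le (\log X)^{A}$ for a suitable constant $A$, or more likely $d$ up to some function going to infinity very slowly like $\exp(\sqrt{\log X})$-type thresholds dictated by the unconditional Chebotarev error term), I sum the main terms and observe that
$$\sum_{d} \mu(d)\,\frac{|C_{E,f}(d^2)|}{|G_E(d^2)|}$$
converges to $C_{E,f}^{SF}$; the tail beyond the cutoff contributes $O(1/(\log\log X)^{1-\varepsilon})$ once one checks that the local factors satisfy $|C_f(\ell^2)|/|\mathrm{GL}_2(\Z/\ell^2\Z)| = O(1/\ell^2)$ on average (a polynomial $f(\tr g,\det g) \equiv 0$ typically cuts out a codimension-one subvariety, giving density $\asymp 1/\ell$ at level $\ell$ and $\asymp 1/\ell^2$ at level $\ell^2$ — one must be careful about the exceptional primes where $f$ degenerates, e.g. bad reduction primes or primes dividing a discriminant-like quantity attached to $f$). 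The accumulated Chebotarev errors over this range must be summed and shown to be $o(\pi(X)/(\log\log X))$. For the \emph{medium} range of $d$, I would use a crude upper-bound sieve estimate: bound $\#\{p \le X : d^2 \mid f_p(E)\}$ trivially by $O(X/d^2) + O(1)$ (counting integers $\le$ roughly $X\sqrt X$ in the relevant arithmetic progressions, using $|f_p(E)| \ll X$ for our sequences), and note $\sum_{d > D} 1/d^2 \ll 1/D$. For the \emph{large} range $d > \sqrt{X}$ (or the appropriate threshold beyond which $d^2 > |f_p(E)|$ forces $f_p(E) = 0$, which happens for only finitely many $p$ since $f$ is squarefree hence not identically vanishing on the curve $y = p$, $x = a_p$), the contribution is $O(1)$, absorbed into the error. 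Since we only want an \emph{upper} bound, I can afford to drop the $\mu(d) = -1$ contributions from the medium range and simply keep the main term plus an error, which is exactly what the statement asserts.

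The main obstacle is calibrating the cutoffs so that the unconditional Chebotarev error term — which for a fixed number field grows with the conductor/discriminant of the field cut out by the $\ell^2$-division tower, and thus worsens as $d$ grows — can be summed against the number of $d$'s without overwhelming the $\pi(X)/(\log\log X)^{1-\varepsilon}$ budget. Getting precisely the $(\log\log X)^{1-\varepsilon}$ savings (rather than a weaker $\log\log X$-type or $(\log X)^{-\delta}$ bound) will require choosing the small-range cutoff to be a slowly-growing function tied to $\log\log X$ and carefully estimating the tail $\sum_{\ell > y}|C_f(\ell^2)|/|\mathrm{GL}_2(\Z/\ell^2\Z)|$ together with the cross terms in the Euler product; I would expect the $\varepsilon$ loss to come from using a weak but uniform form of effective Chebotarev (no GRH) and absorbing the discriminant contribution of the division fields via Serre-type bounds. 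A secondary technical point is the treatment of the finitely many primes $\ell \mid M_E$ and of primes of bad reduction, where one uses the genuine group $G_E(n^2)$ rather than all of $\mathrm{GL}_2$, but these only affect the finite product $\sum_{n \mid M_E}\mu(n)|C_{E,f}(n^2)|/|G_E(n^2)|$ and contribute no error of consequence.
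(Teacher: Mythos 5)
Your overall strategy---M\"obius to detect squares, effective Chebotarev for the main terms, and a local bound of the shape $|C_f(\ell^2)|/|\mathrm{GL}_2(\Z/\ell^2\Z)|\ll \ell^{-2}$ for the tail---is the right one, and you correctly locate both the source of the constant $C_{E,f}^{SF}$ and the reason the saving is only $(\log\log X)^{1-\varepsilon}$ (the unconditional Chebotarev range). But your treatment of the medium range is a genuine gap. The bound $\#\{p\le X: d^2\mid f_p(E)\}\ll X/d^2+1$ is not trivial and is in fact unavailable: the map $p\mapsto f_p(E)$ is far from injective (for $f=x$ one has $f_p(E)=a_p(E)=0$ for infinitely many $p$ by Elkies, which also breaks your claim that the large range contributes $O(1)$ via ``finitely many'' vanishing values; for $f=y+1-x$ a given value $n$ can a priori be attained by $\asymp\sqrt{n}/\log n$ primes in the Hasse window), so you cannot pass from ``multiples of $d^2$ up to $\max|f_p(E)|$'' to ``primes $p$''. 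Without Chebotarev there is no elementary way to confine $p$ to few residue classes modulo $d^2$, and with unconditional Chebotarev you are stuck at $d\ll(\log X)^{1/24}$, since $[\Q(E[d^2]):\Q]\asymp d^8$ and Lagarias--Odlyzko requires $\log X\gg n_K(\log d_K)^2\asymp d^{24}(\log d)^2$; so your ``small range'' cannot reach $(\log X)^A$, let alone $\exp(\sqrt{\log X})$. Finally, ``dropping the $\mu(d)=-1$ terms'' from a sharp truncation $d\le D$ of $\sum_{d^2\mid n}\mu(d)$ does not produce an upper bound for the indicator of squarefreeness---the discarded terms with $d>D$ are needed for the identity, and the positive medium-range terms you keep still require exactly the estimate that is missing. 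This medium range is precisely the obstruction that keeps Conjecture \ref{conjecture-SF} open.

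The paper avoids the medium and large ranges altogether. One starts from the sieve inequality $\pi_{E,f}^{SF}(X)\le\#\{p\le X:\ \ell^2\nmid f_p(E)\ \forall\,\ell\le z\}$, which by \emph{exact} inclusion--exclusion over the primes $\ell\le z$ involves only the divisors $d$ of $P(z)=\prod_{\ell\le z}\ell$. Effective Chebotarev is then applied a \emph{single} time, in the one field $\Q(E[P(z)^2])$, to the union of conjugacy classes $\Omega_E(P(z)^2)$; the unconditional error term forces $P(z)\ll(\log X)^{1/24-\varepsilon}$, i.e.\ $z\asymp\log\log X$, and the discrepancy between $\sum_{d\mid P(z)}\mu(d)\,|C_{E,f}(d^2)|/|G_E(d^2)|$ and the full constant $C_{E,f}^{SF}$ is $\ll\sum_{d\ge z}d^{-2+\varepsilon}\ll z^{-1+\varepsilon}$ by the key lemma, giving exactly the claimed error. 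If you replace your three-range decomposition by this single sieve-plus-one-Chebotarev step, the rest of your outline goes through.
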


Our theorem provides evidence for the
conjectural  number of squarefree values in sequences $f_p(E)$ associated with
elliptic curves.

\begin{conjecture} \label{conjecture-SF} Let $E$ be a non-CM elliptic curve defined over $\mathbb{Q}$. As $X \rightarrow \infty$, we have
$$\pi_{E,f}^{SF}(X) \sim C_{E,f}^{SF} \; \pi(X),$$
where
$C_{E,f}^{SF}$ is the constant given in \eqref{defCE}.
\end{conjecture}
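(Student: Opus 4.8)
The statement is an asymptotic equality, so the plan is to combine the upper bound already supplied by Theorem~\ref{upperbound} with a matching lower bound; unconditionally the lower bound is out of reach, so the realistic target is the conditional proof of Cojocaru~\cite{cojocaru-thesis}, which for $f_p(E)=p+1-a_p(E)$ yields exactly this asymptotic assuming the Generalized Riemann Hypothesis, the Pair Correlation Conjecture, and the Artin Holomorphy Conjecture, and whose method should transfer to $f_p(E)=a_p(E)^2-4p$. Using the identity $\mathbbm{1}[n\text{ squarefree}]=\sum_{d^2\mid n}\mu(d)$ and writing $\pi_{E,f}(X;m):=\#\{p\le X:\ m\mid f_p(E)\}$, one has
\begin{equation*}
\pi_{E,f}^{SF}(X)=\sum_{d\ge 1}\mu(d)\,\pi_{E,f}(X;d^2).
\end{equation*}
The key input is that, away from the primes dividing $M_E$ and modulo the usual identification of $\det$ with the cyclotomic character, $d^2\mid f_p(E)$ holds precisely when the Frobenius class at $p$ in $\mathrm{Gal}(\Q(E[d^2])/\Q)=G_E(d^2)$ lies in $C_{E,f}(d^2)$; the effective Chebotarev density theorem then gives
\begin{equation*}
\pi_{E,f}(X;d^2)=\frac{|C_{E,f}(d^2)|}{|G_E(d^2)|}\,\pi(X)+E(X,d),
\end{equation*}
where, under GRH for the Dedekind zeta functions of the fields $\Q(E[d^2])$, $E(X,d)$ is at most a fixed power of $d$ times $\sqrt X\log(dX)$ (the Artin Holomorphy Conjecture being what improves the dependence on $[\Q(E[d^2]):\Q]$).

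I would split the $d$-sum at two thresholds $\xi_1<\xi_2$. For $d\le\xi_1$, taken to be a small fixed power of $X$, the Chebotarev main terms reassemble into the constant: for squarefree $d$ write $d=nm$ with $n$ composed of primes dividing $M_E$ and $(m,M_E)=1$; then the Chinese Remainder Theorem together with the fact (built into the definition of $M_E$ in Section~\ref{serre-curves}) that $G_E(m^2)=\mathrm{GL}_2(\Z/m^2\Z)$ gives
\begin{equation*}
\frac{|C_{E,f}(d^2)|}{|G_E(d^2)|}=\frac{|C_{E,f}(n^2)|}{|G_E(n^2)|}\cdot\frac{|C_f(m^2)|}{|\mathrm{GL}_2(\Z/m^2\Z)|},
\end{equation*}
so that $\sum_{d\ge1}\mu(d)\,\frac{|C_{E,f}(d^2)|}{|G_E(d^2)|}$ factors as the finite sum over $n\mid M_E$ times the Euler product over $\ell\nmid M_E$ in \eqref{defCE}, i.e.\ equals $C_{E,f}^{SF}$. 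Since $|C_f(\ell^2)|/|\mathrm{GL}_2(\Z/\ell^2\Z)|=O(\ell^{-2})$ (a direct count of residue classes for both sequences of interest), the truncation tail is $\ll\sum_{m>\xi_1}m^{-2}=o(1)$, and the accumulated Chebotarev error $\sum_{d\le\xi_1}|E(X,d)|$ is a fixed power of $\xi_1$ times $\sqrt X\log X$, hence $o(\pi(X))$ once $\xi_1$ is small enough.

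For $\xi_1<d\le\xi_2$ the Chebotarev error is too large to sum term by term, and one must instead bound $\sum_{\xi_1<d\le\xi_2}\pi_{E,f}(X;d^2)$ by a square-sieve or large-sieve estimate for the Frobenius condition $d^2\mid f_p(E)$, as in \cite{cojocaru-thesis}; this is the step where the Pair Correlation Conjecture enters, to keep the error uniform in $d$, and with $\xi_2$ an appropriate power of $X$ just below $\sqrt X$ this range contributes $o(\pi(X))$. The genuine obstacle --- and the reason Conjecture~\ref{conjecture-SF} is stated rather than proved --- is the tail $d>\xi_2$: since the two sequences of interest satisfy $|f_p(E)|\ll X$, a square divisor $d^2$ with $d$ nearly $\sqrt X$ forces $f_p(E)$ to be one of only a few multiples of $d^2$, that is, a near-square, and estimating $\#\{p\le X:\ f_p(E)=kd^2\}$ summed over such $d$ and $k$ is a Lang--Trotter/Hardy--Littlewood-type problem with no unconditional input (precisely the obstruction illustrated by the CM example $p=(a_p(E)/2)^2+1$ in the introduction). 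Under GRH this tail is absorbed into the error terms once they are sharpened using the Pair Correlation and Artin Holomorphy hypotheses, which is how \cite{cojocaru-thesis} obtains the asymptotic for $f_p(E)=p+1-a_p(E)$; carrying the same bookkeeping through for $f_p(E)=a_p(E)^2-4p$ --- where one must additionally keep track of the factor of $4$ and of whether $E$ has rational $2$-torsion --- should give Conjecture~\ref{conjecture-SF} conditionally, whereas an unconditional proof would need a genuinely new idea for the near-square range.
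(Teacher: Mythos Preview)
The statement you are attempting to prove is \emph{Conjecture}~\ref{conjecture-SF}, and the paper does not prove it; it is stated precisely as an open conjecture, supported by the upper bound of Theorem~\ref{upperbound} and the average results of Theorems~\ref{thmdu}, \ref{thmge}, and \ref{thmaveragec}. There is therefore no proof in the paper to compare your proposal against.

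Your write-up is not a proof either, and you say so yourself: it is an outline of a \emph{conditional} argument in the style of Cojocaru~\cite{cojocaru-thesis}, requiring GRH, Artin holomorphy, and a pair-correlation input to handle the intermediate and large-$d$ ranges. That outline is broadly reasonable and is exactly the conditional route the paper alludes to in the introduction. But as an unconditional proof it has a genuine gap that you correctly identify: the tail $d>\xi_2$, where $d^2\mid f_p(E)$ forces $f_p(E)$ to be a near-square, reduces to counting primes in thin sets of Lang--Trotter type, and no unconditional estimate is available. Presenting this as a ``proof proposal'' for the conjecture is therefore a category error: what you have written is a heuristic derivation of the conjectural constant together with an explanation of why the conjecture is currently inaccessible, which is precisely the status the paper assigns to it.
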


As mentioned in the previous section, Conjecture \ref{conjecture-SF} has been proven on average over the family
of all elliptic curves for some specific sequences $f_p(E)$. Let $E(a, b)$ denote the elliptic curve given by the equation
$$
y^2=x^3+ax+b,
$$with $4a^3+27b^2\neq 0$. Let $A$ and $B$ be positive constants. We define
\begin{eqnarray} \label{familyEC}
\mathcal{C}(A, B) : = \{E(a, b) : |a| \leq A \text{ and } |b| \leq B\}.
\end{eqnarray}

The following average results are due to David and Urroz, and Gekeler, respectively.

\begin{theorem} \cite{du} \label{thmdu} Let $f(x,y) = x^2 - 4y$ such that $f_p(E) = a_p(E)^2 - 4p$. Then
for any $\varepsilon > 0$, and any $A,B$ such that
$AB > x \log^8{x}$ with $A,B > x^\varepsilon$, we have as $X \rightarrow \infty$
$$
\frac{1}{|\mathcal{C}(A, B)|} \sum_{E \in \mathcal{C}(A, B)} \pi_{E, f}^{SF}(X) \sim C_f^{SF} \pi(X)
$$
where
$$
C_f^{SF} = \prod_{\ell}
\left(1 - \frac{|C_{f}(\ell^2)|}{|\mathrm{GL}_2(\Z/\ell^2\Z)|}\right) = \frac{1}{3} \prod_{\ell \neq 2}  1 - \frac{\ell^2 + \ell - 1}{\ell^2 (\ell^2 -1)}.$$
\end{theorem}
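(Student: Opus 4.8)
The plan is to prove the average by interchanging the order of summation and reducing, for each prime $p$, to a counting problem for elliptic curves over $\F_p$. Setting aside the negligible contribution of curves with bad reduction at $p$, one is left with $\sum_{p\le X}\#\{E\in\mathcal C(A,B):a_p(E)^2-4p\text{ squarefree}\}$. Detect the squarefree condition by $\mathbbm 1[n\text{ squarefree}]=\sum_{d^2\mid n}\mu(d)$: since $0\le 4p-a_p(E)^2\le 4X$, only $d\le 2\sqrt X$ occur, and I would further truncate at a slowly growing parameter $Y=Y(X)$, estimating the tail $Y<d\le 2\sqrt X$ separately. Everything is thereby reduced to evaluating, for squarefree $d\le Y$ and $p\le X$,
\[
\mathcal N(p,d):=\#\bigl\{E(a,b)\in\mathcal C(A,B)\;:\;d^2\mid a_p(E(a,b))^2-4p\bigr\}.
\]

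Since $a_p(E(a,b))$ depends only on $(a,b)\bmod p$, for $p$ small relative to $A$ and $B$ one can pass to the residues: $\mathcal N(p,d)$ is the number of classes $(\bar a,\bar b)\in\F_p^2$ with $4\bar a^3+27\bar b^2\not\equiv 0$ and $d^2\mid a_p(E(\bar a,\bar b))^2-4p$, each weighted by its number of lifts into the box, namely $\tfrac{4AB}{p^2}+O(\tfrac{A}{p}+\tfrac{B}{p}+1)$. By the Deuring correspondence the number of such classes is a weighted sum of Hurwitz--Kronecker class numbers $\sum_{|t|<2\sqrt p,\ d^2\mid t^2-4p}H(4p-t^2)$ (with the usual corrections at $j=0,1728$), and incomplete class-number estimates of the type used by Gekeler give
\[
\#\{(\bar a,\bar b)\in\F_p^2:\ldots\}=p^2\cdot\frac{\#\{g\in\mathrm{GL}_2(\Z/d^2\Z):(\tr g)^2-4\det g\equiv 0,\ \det g\equiv p\pmod{d^2}\}}{|\mathrm{SL}_2(\Z/d^2\Z)|}+O_d\bigl(\sqrt p\,(\log p)^{O(1)}\bigr).
\]
Summing the leading term over $p\le X$ and applying the prime number theorem in arithmetic progressions modulo $d^2$ collapses the condition $\det g\equiv p$ to a sum over all units, converting the local factor into $|C_f(d^2)|/|\mathrm{GL}_2(\Z/d^2\Z)|$, with $C_f$ as in \eqref{def-cf} and $f(x,y)=x^2-4y$. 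The arithmetic function $d\mapsto|C_f(d^2)|/|\mathrm{GL}_2(\Z/d^2\Z)|$ is multiplicative (Chinese Remainder Theorem applied to both $C_f$ and $\mathrm{GL}_2$), so $\sum_d\mu(d)\,|C_f(d^2)|/|\mathrm{GL}_2(\Z/d^2\Z)|=\prod_\ell\bigl(1-|C_f(\ell^2)|/|\mathrm{GL}_2(\Z/\ell^2\Z)|\bigr)=C_f^{SF}$, and dividing by $|\mathcal C(A,B)|=4AB(1+o(1))$ produces the main term $C_f^{SF}\,\pi(X)$. The explicit evaluation $C_f^{SF}=\tfrac13\prod_{\ell\ne 2}\bigl(1-\tfrac{\ell^2+\ell-1}{\ell^2(\ell^2-1)}\bigr)$ is then a routine local count of the $g$ over $\Z/\ell^2\Z$ whose characteristic polynomial has vanishing discriminant, with $\ell=2$ behaving exceptionally (there $4\det g\equiv 0\pmod 4$ automatically, so the condition degenerates to $\tr g$ even) and supplying the factor $\tfrac13$.

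The main obstacle is uniform control of the error terms and of the regime where reduction modulo $p$ is wasteful. For $d\le Y$, the $O_d(\sqrt p\,(\log p)^{O(1)})$ error together with the $O(\tfrac{A}{p}+\tfrac{B}{p}+1)$ lifting error must, after weighting, summing over $p\le X$, and dividing by $|\mathcal C(A,B)|$, be shown to be $o(\pi(X))$; since the naive lifting error is acceptable only for $p$ not too large compared with $\min(A,B)$, one must split the sum over $p$ and argue separately in the complementary range. The tail $Y<d\le 2\sqrt X$ --- where the class-number estimate is far too lossy to use term by term --- must be bounded directly, by showing that for almost all pairs $(E,p)$ the powerful part of $4p-a_p(E)^2$ is small: writing $4p-a_p(E)^2=d^2m$ and counting the admissible triples $(d,m,p)$ (once $d^2m$ and $p$ are fixed, $a_p(E)$ is determined up to sign, so the number of $E$ in the box with that trace is controlled by a single class number), one combines the Hasse bound, bounds for $H(4p-t^2)$, and the size of $\mathcal C(A,B)$. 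Balancing the truncation level $Y$ and the ranges of $p$ against $A$, $B$, and $X$ is exactly what forces the hypotheses $AB>X\log^8 X$ and $A,B>X^\varepsilon$, and is the technical heart of the argument.
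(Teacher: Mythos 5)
This theorem is not proved in the present paper but is quoted from David--Jim\'enez Urroz \cite{du}, whose method the paper summarizes as an application of Deuring's theorem followed by an average of class numbers; your proposal follows essentially that same route (interchange of summation, reduction modulo $p$ with lift-counting, M\"obius detection with truncation, Deuring/Hurwitz--Kronecker class-number averages for small moduli, and a direct bound on the tail via the powerful part of $4p-a_p(E)^2$), and your local computation at $\ell=2$ yielding the factor $\tfrac13$ is correct. The outline is sound and consistent with the cited proof.
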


\begin{theorem} \cite{gekeler} \label{thmge} If $f(x,y) = y+1-x$ such that $f_p(E) = p+1-a_p(E)$,
we have as $X \rightarrow \infty$
$$
\frac{\sum_{p \leq X} \# \left\{ E / \F_p \;:\; f_p(E) \quad \mbox{is squarefree} \right\}}
{\sum_{p \leq X}  \# \left\{ E / \F_p \right\}} \sim C_f^{SF}
$$
where
$$
C_f^{SF} = \prod_{\ell}
\left(1 - \frac{|C_{f}(\ell^2)|}{|\mathrm{GL}_2(\Z/\ell^2\Z)|}\right) =  \prod_{\ell}  1 - \frac{\ell^3 - \ell - 1}{\ell^2 (\ell^2 -1)(\ell - 1)}.
$$
\end{theorem}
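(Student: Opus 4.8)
The plan, which is essentially Gekeler's, is to translate the count of curves over $\F_p$ into a sum of Hurwitz--Kronecker class numbers, detect squarefreeness by M\"obius inversion, evaluate the resulting sums over arithmetic progressions of Frobenius traces, and then average over $p\le X$. First I would invoke Deuring's theorem: for a prime $p>3$, the number of $E/\F_p$ with $a_p(E)=t$, each counted with weight $1/|\mathrm{Aut}(E)|$, equals the Hurwitz class number $H(4p-t^2)$ for every $t$ with $0<t^2<4p$, while the single term $t=0$ (the supersingular curves) contributes only $O(p^{1/2+\varepsilon})$ and may be discarded; passing between weighted and unweighted counts alters everything by $O(1)$ per prime, so it does not affect the asymptotic. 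Writing the indicator of squarefreeness as $\sum_{d^2\mid n}\mu(d)$, and using that $f(x,y)=y+1-x$ is \emph{linear} in $x$, so that $d^2\mid p+1-t$ confines $t$ to a single residue class modulo $d^2$, I obtain
$$\#\{E/\F_p : p+1-a_p(E)\ \text{squarefree}\}=\sum_{d\le D}\mu(d)\sum_{\substack{|t|<2\sqrt p\\ t\equiv p+1\ (d^2)}}H(4p-t^2)\;+\;(\text{tail from }d>D)+O(p^{1/2+\varepsilon}),$$
for a truncation parameter $D$ to be chosen; moreover, by the Kronecker--Hurwitz class number relation $\sum_{|t|<2\sqrt p}H(4p-t^2)=\kappa p+O(p^{1/2+\varepsilon})$ for an absolute constant $\kappa$, so that the denominator $\sum_{p\le X}\#\{E/\F_p\}$ is $\sim\kappa X^2/(2\log X)$.

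Next I would evaluate the inner progression sums, where the main input is Gekeler's expression of the Hurwitz class number as a ``Frobenius singular series'': writing the class number formula $\ell$-adically one has $H(4p-t^2)=c\,\sqrt{4p-t^2}\,\prod_\ell w_\ell(t,p)$ with $c$ absolute, where $w_\ell(t,p)$ is the density, relative to Haar measure on $\mathrm{GL}_2(\Z_\ell)$, of matrices with trace $t$ and determinant $p$ (equivalently, of roots of $x^2-tx+p$ in $\Z_\ell$). Summing over $t\equiv p+1\pmod{d^2}$ and splitting this product into the primes dividing $d$ and the rest, I would factor out the local contribution at $\ell\mid d$, which is constant on the progression, so that the smooth factor $\sqrt{4p-t^2}$ and the remaining local factors reproduce, up to an acceptable error, the full unrestricted sum $\sum_{|t|<2\sqrt p}H(4p-t^2)$. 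The net outcome should be that the proportion of $E/\F_p$ with $d^2\mid p+1-a_p(E)$ equals $\delta_{d^2}(p)+(\text{error})$, where
$$\delta_{d^2}(p):=\frac{\#\{g\in\mathrm{GL}_2(\Z/d^2\Z):\det g\equiv p,\ f(\tr g,\det g)\equiv0\ (d^2)\}}{\#\{g\in\mathrm{GL}_2(\Z/d^2\Z):\det g\equiv p\}},$$
a quantity depending on $p$ only through $p\bmod d^2$ and satisfying $\delta_{d^2}(p)\ll d^{-2}$.

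Now I would sum over $p\le X$, interchanging the finite sum over $d\le D$ with the sum over $p$. Since $\delta_{d^2}$ is a function of $p\bmod d^2$, the prime number theorem in arithmetic progressions gives $\frac1{\pi(X)}\sum_{p\le X}\delta_{d^2}(p)\to\frac1{\varphi(d^2)}\sum_{(a,d)=1}\delta_{d^2}(a)$; summing the numerator of $\delta_{d^2}(a)$ over \emph{all} units $a$ simply removes the determinant restriction, so this limit equals $|C_f(d^2)|/|\mathrm{GL}_2(\Z/d^2\Z)|$. After dividing by $\sum_{p\le X}\#\{E/\F_p\}\sim\kappa\sum_{p\le X}p$, the ratio in the theorem tends to $\sum_{d\ge1}\mu(d)\,|C_f(d^2)|/|\mathrm{GL}_2(\Z/d^2\Z)|$; since $d\mapsto|C_f(d^2)|/|\mathrm{GL}_2(\Z/d^2\Z)|$ is multiplicative on squarefree $d$ by the Chinese Remainder Theorem and is $O(\ell^{-2})$ at $\ell$, this equals $\prod_\ell\bigl(1-|C_f(\ell^2)|/|\mathrm{GL}_2(\Z/\ell^2\Z)|\bigr)=C_f^{SF}$, and an explicit count of $|C_f(\ell^2)|=\#\{g\in\mathrm{GL}_2(\Z/\ell^2\Z):\det g-\tr g+1\equiv0\ (\ell^2)\}$ produces the stated value $\prod_\ell\bigl(1-\tfrac{\ell^3-\ell-1}{\ell^2(\ell^2-1)(\ell-1)}\bigr)$.

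The hard part will be making the two middle steps quantitative \emph{simultaneously} in $d$ and in $p$. Two estimates are needed: a tail bound $\sum_{d>D}\sum_{p\le X}\#\{E/\F_p:d^2\mid p+1-a_p(E)\}=o(X^2/\log X)$ to justify truncating the M\"obius sum --- this follows from the crude bound $\#\{E/\F_p:d^2\mid p+1-a_p(E)\}\ll p^{1+\varepsilon}d^{-2}$ (itself a consequence of $H(N)\ll N^{1/2+\varepsilon}$), which forces $D\to\infty$ only a little faster than $\log X$; and an error term in $\sum_{t\equiv p+1\,(d^2)}H(4p-t^2)$ that is $o(p/d^2)$ on average over $p\le X$ and summable over $d\le D$. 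The latter is the genuine analytic heart: beyond the harmless endpoint error $O(p^{1/2})$ from replacing $\sum\sqrt{4p-t^2}$ by an integral, one must control the oscillation of the singular series $\prod_\ell w_\ell(t,p)$ as $t$ runs over the progression, i.e.\ beat the trivial bound for an incomplete sum of a multiplicative function of $4p-t^2$, which is done by exploiting its genus-character structure together with cancellation in the resulting character sums (via the large sieve or the P\'olya--Vinogradov inequality). Because $f$ is linear in the trace, these sums run over a single residue class and the estimates are essentially classical; the sequence $a_p(E)^2-4p$ of Theorem~\ref{thmdu} is genuinely harder because the analogous condition $d^2\mid a_p(E)^2-4p$ is quadratic in the trace and the relevant number of admissible $t$ is itself governed by class numbers.
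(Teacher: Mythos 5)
This theorem is not proved in the paper at all: it is quoted from Gekeler \cite{gekeler}, and the only thing the paper says about its proof is that Gekeler uses ``completely different techniques that rely on Howe's work on counting points on the moduli spaces of elliptic curves over $\F_p$ with a given group structure.'' Your plan is therefore a genuinely different route from the cited one: rather than counting curves whose group contains a prescribed subgroup (Howe's exact formulas, which make the local densities at each $\ell$ visible for a single fixed $p$ and largely avoid incomplete character sums), you detect $d^2\mid |E(\F_p)|$ purely through the trace, invoke Deuring, and average Hurwitz class numbers over arithmetic progressions of $t$. This is precisely the machinery of the \emph{other} quoted theorem (David--Jim\'enez Urroz, Theorem \ref{thmdu}), transplanted to the linear sequence $p+1-t$; it is a legitimate strategy, and your final bookkeeping (multiplicativity of $|C_f(d^2)|/|\mathrm{GL}_2(\Z/d^2\Z)|$, the identity $\frac{1}{\varphi(d^2)}\sum_a\delta_{d^2}(a)=|C_f(d^2)|/|\mathrm{GL}_2(\Z/d^2\Z)|$, and the Euler product) is correct. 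What the class-number route buys is uniformity of method across sequences $f$; what Howe's route buys is an exact answer for each fixed $p$, after which the average over $p\le X$ is soft.

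One step in your sketch would fail as written: the tail bound. You claim $\#\{E/\F_p: d^2\mid p+1-a_p(E)\}\ll p^{1+\varepsilon}d^{-2}$, but once $d^2$ exceeds the length $4\sqrt p$ of the Hasse interval the progression contains at most one admissible $t$, and the honest bound is $O(p^{1/2+\varepsilon})$ times the indicator that the progression meets the interval --- not $p^{1+\varepsilon}d^{-2}$. Summing that indicator trivially over $p\le X$ and over $d$ up to $\sqrt X$ (square divisors of $p+1-t$ can be this large) gives a contribution of size $X^{2+\varepsilon}$, which is \emph{not} $o(X^2/\log X)$. The fix is to handle the range $d>p^{1/4}$ by grouping over $n=p+1-t$: the number of pairs $(p,t)$ with $p+1-t=n$ is $O(\sqrt n/\log n)$ by the prime number theorem in short intervals of length $\sqrt n$ (Chebyshev suffices), and $\sum_{n\le 2X}\#\{d>Y: d^2\mid n\}\ll X/Y$, which brings the large-$d$ tail down to $O(X^{7/4+\varepsilon})$. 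You should also be explicit that the truncation parameter $D$ need only grow like a power of $\log X$, since the error in the singular-series evaluation of $\sum_{t\equiv p+1\,(d^2)}H(4p-t^2)$ degrades polynomially in $d$; with $D$ that small the P\'olya--Vinogradov input you mention is indeed enough.
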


The proofs of the average results stated in Theorems \ref{thmdu} and \ref{thmge}
are very different. For Theorem \ref{thmdu}, the authors use Deuring's Theorem
to count elliptic curves over $\F_p$ such that $a_p(E)^2 - 4p$ is squarefree, and the
theorem follows from taking an average of class numbers. For Theorem \ref{thmge}, the author uses
completely different techniques that rely on Howe's work on
counting points on the moduli spaces of elliptic curves over $\F_p$ with a given group structure.
In both cases, the average constant $C_f^{SF}$ follows from somewhat elaborate computations that are
particular to the sequence $f_p(E)$ being studied. For a general sequence
$f_p(E)$, one believes that we should have
$$
\frac{1}{|\mathcal{C}(A, B)|} \sum_{E \in \mathcal{C}(A, B)} \pi_{E, f}^{SF}(X) \sim C_f^{SF} \pi(X)
$$
where
$$C_f^{SF} := \prod_{\ell}\left(1 - \frac{|C_{f}(\ell^2)|}{|\mathrm{GL}_2(\Z/\ell^2\Z)|}\right).$$

We provide evidence for an average result of this nature by showing that the average of the conjectural constants $C_{E,f}^{SF}$ defined in \eqref{defCE}
coincide with the constant $C_f^{SF}$ for a general squarefree polynomial
$f \in \Z[x,y]$. This forms our second result.

\begin{theorem}\label{thmaveragec} Let $f \in \Z[x,y]$ be non-constant and squarefree, and
let $\mathcal{C}(A, B)$ be the family of curves defined in (\ref{familyEC}).
Then, we have
$$\frac{1}{|\mathcal{C}(A, B)|} \sum_{E \in \mathcal{C}(A, B)} C_{E, f}^{SF} \sim C_f^{SF}.$$
\end{theorem}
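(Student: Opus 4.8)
The plan is to exploit the fact that a ``generic'' elliptic curve in the box $\mathcal{C}(A,B)$ is a Serre curve, so that for all but a density-zero set of curves the group $G_E(n)$ is as large as possible, whence the local factors $|C_{E,f}(n^2)|/|G_E(n^2)|$ reduce to the generic quantities $|C_f(n^2)|/|\mathrm{GL}_2(\Z/n^2\Z)|$. Concretely, I would first recall Jones's result (already used in \cite{jones} for the Lang-Trotter and Koblitz constants) that the number of $E\in\mathcal{C}(A,B)$ that fail to be a Serre curve is $O\bigl(\tfrac{|\mathcal{C}(A,B)|}{\log\min(A,B)}\bigr)$ (with the usual restriction on the relative sizes of $A,B$), and that for a Serre curve $G_E(n)$ is an explicit index-$1$ or index-$2$ subgroup of $\mathrm{GL}_2(\Z/n\Z)$ determined by the integer $M_E$, which for a Serre curve divides $2\,\mathrm{rad}(\Delta_E)$ in a controlled way. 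Since $C_{E,f}^{SF}$ is bounded uniformly in $E$ (each Euler factor lies in $[0,1]$ and the finite product over $n\mid M_E$ is bounded by $2^{\omega(M_E)}$ times a convergent tail, hence is $O(1)$ using $|C_{E,f}(n^2)|/|G_E(n^2)|\ll n^{-1}$ for $n>1$), the contribution of the non-Serre curves to the average is $O(1/\log\min(A,B))=o(1)$.

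Next I would analyze $C_{E,f}^{SF}$ for a Serre curve $E$. Write $M=M_E$. The product over $\ell\nmid M$ is $\prod_{\ell\nmid M}\bigl(1-|C_f(\ell^2)|/|\mathrm{GL}_2(\Z/\ell^2\Z)|\bigr)$, which differs from the full product $C_f^{SF}=\prod_\ell\bigl(1-|C_f(\ell^2)|/|\mathrm{GL}_2(\Z/\ell^2\Z)|\bigr)$ exactly by the reciprocals of the Euler factors at $\ell\mid M$. So one is led to compare
\[
C_{E,f}^{SF}=C_f^{SF}\cdot\prod_{\ell\mid M}\Bigl(1-\tfrac{|C_f(\ell^2)|}{|\mathrm{GL}_2(\Z/\ell^2\Z)|}\Bigr)^{-1}\cdot\sum_{n\mid M}\mu(n)\frac{|C_{E,f}(n^2)|}{|G_E(n^2)|}.
\]
For a Serre curve the index $[\mathrm{GL}_2(\Z/n\Z):G_E(n)]$ is $1$ or $2$, the $2$ occurring via a quadratic character attached to $\sqrt{\Delta_E}$, so $|C_{E,f}(n^2)|/|G_E(n^2)|$ and $|C_f(n^2)|/|\mathrm{GL}_2(\Z/n^2\Z)|$ agree up to an explicitly computable correction supported on the part of $n$ dividing a fixed small divisor of $M$ (essentially the even part and the ``square'' part coming from the nontrivial entanglement). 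The heart of the matter is then a purely combinatorial identity: for each squarefree $M$ arising from a Serre curve, the factor $\prod_{\ell\mid M}(\cdots)^{-1}\sum_{n\mid M}\mu(n)(\cdots)$ equals $1$ up to an error that decays in $M$; the leading term is exactly the ``inclusion–exclusion undoes the Euler product'' phenomenon, and the Serre-curve corrections to the local factors contribute only at a bounded set of primes, producing a quantity $1+O(2^{\omega(\mathrm{rad}\Delta_E)}/P^-(M))$ or similar, where $P^-(M)$ is the least prime factor.

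The final step is to average over $E$. Having shown $C_{E,f}^{SF}=C_f^{SF}\bigl(1+r_E\bigr)$ for Serre curves $E$ with $|r_E|$ controlled by arithmetic functions of $\Delta_E$ (a power of $\omega(\Delta_E)$ divided by $P^-(M_E)$, or more crudely by $1/\log\log|\Delta_E|$), I would invoke standard estimates on the statistics of $\Delta_E=-16(4a^3+27b^2)$ as $(a,b)$ ranges over the box: the discriminant is $\gg \min(A,B)$ in absolute value for all but $O(\sqrt{AB})$ pairs, and $\sum_{E\in\mathcal{C}(A,B)} 2^{\omega(\Delta_E)}/P^-(M_E)=o(|\mathcal{C}(A,B)|)$ by a sieve/divisor-bound argument of the same flavor as in \cite{jones}. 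Combining the three contributions — non-Serre curves $o(1)$, Serre-curve main term $C_f^{SF}$, Serre-curve error $o(1)$ — yields the claimed asymptotic. The main obstacle is the combinatorial identity in the middle step: one must show cleanly, for a general squarefree $f$, that the product over $\ell\mid M_E$ of inverse Euler factors exactly cancels the inclusion–exclusion sum $\sum_{n\mid M_E}\mu(n)|C_{E,f}(n^2)|/|G_E(n^2)|$ in the limit, handling the Serre entanglement at $2$ and at the ramified primes of $\Delta_E$ by explicit computation of $C_f(\ell^2)$ versus its restriction to the index-$2$ subgroup; the polynomials $f(x,y)=y+1-x$ and $f(x,y)=x^2-4y$ should be worked out as illustrative cases to confirm consistency with Theorems \ref{thmdu} and \ref{thmge}.
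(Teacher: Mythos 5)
Your overall architecture matches the paper's: split $\mathcal{C}(A,B)$ into Serre and non-Serre curves, show the non-Serre contribution is negligible, and show that for Serre curves $C_{E,f}^{SF}$ differs from $C_f^{SF}$ only through the entanglement governed by $M_E$, then average over the box. However, there are several genuine gaps in the quantitative steps, and one conceptual misreading of where the entanglement lives.

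First, the engine of the whole argument is the bound $|C_f(\ell^2)|/|\mathrm{GL}_2(\Z/\ell^2\Z)|\ll_f \ell^{-2+\varepsilon}$ for squarefree $f$ (the paper's Key Lemma, proved by counting matrices mod $p$ with prescribed trace and determinant and then counting lifts mod $p^2$, using squarefreeness of $f$ via a Bezout/resultant argument). You only invoke a decay of $n^{-1}$, which is insufficient: with $\ell^{-1}$ decay the Euler product defining $C_f^{SF}$ could diverge to $0$ and the tail sums $\sum_{n}|C_f(n^2)|/|\mathrm{GL}_2(\Z/n^2\Z)|$ do not converge, so even the rearrangement of $C_{E,f}^{SF}$ into an absolutely convergent sum $\sum_d\mu(d)|C_{E,f}(d^2)|/|G_E(d^2)|$ is not justified. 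Second, your error term for Serre curves cannot be right: $M_E$ is always even (it is $2|\Delta_{SF}|$ or $4|\Delta_{SF}|$), so $P^-(M_E)=2$ and both $O(1/P^-(M_E))$ and $\sum_E 2^{\omega(\Delta_E)}/P^-(M_E)$ fail to be $o(1)$ and $o(|\mathcal{C}(A,B)|)$ respectively. The actual mechanism is the opposite of ``corrections supported on a bounded set of primes'': for a Serre curve, $G_E(n^2)=\mathrm{GL}_2(\Z/n^2\Z)$ for every squarefree $n\mid M_E$ \emph{except} $n\in\{M_E,M_E/2,M_E/4\}$, because the index-two condition is a single global relation tied to $\Q(\sqrt{\Delta_E})\subseteq\Q(E[2])$ that is only visible at the full modulus. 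Hence the discrepancy $C_{E,f}^{SF}-C_f^{SF}$ is supported on $n$ divisible by $M_E/4$, and the Key Lemma gives $|C_{E,f}^{SF}-C_f^{SF}|\ll M_E^{-(2-\varepsilon)}\ll|\Delta_{SF}|^{-(2-\varepsilon)}$. Averaging this requires controlling the \emph{squarefree part} of $4a^3+27b^2$ (which can equal $1$ even when the discriminant is huge), i.e., Jones's estimate on $\sum_{a,b}|(4a^3+27b^2)_{SF}|^{-k}$; the statistic ``$|\Delta_E|\gg\min(A,B)$ for most $(a,b)$'' that you propose does not address this. Finally, for the non-Serre curves your uniform bound $C_{E,f}^{SF}=O(1)$ is asserted via ``$2^{\omega(M_E)}$ times a convergent tail,'' but $2^{\omega(M_E)}$ is unbounded over the family; you need either the inclusion--exclusion interpretation of $\sum_{n\mid M_E}\mu(n)|C_{E,f}(n^2)|/|G_E(n^2)|$ as a density in $[0,1]$, or (as the paper does) Zywina's effective open image theorem to get $C_{E,f}^{SF}\ll(\log AB)^{\gamma}$, which is then absorbed by the power saving $\log^{\beta}(\min(A,B))/\sqrt{\min(A,B)}$ in Jones's count of non-Serre curves; your weaker claimed density $O(1/\log\min(A,B))$ for non-Serre curves leaves no room for such a logarithmic loss.
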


In particular, the constants appearing in Theorems \ref{thmdu} and \ref{thmge}
are indeed the average of the constants from Conjecture \ref{conjecture-SF}.

\begin{corollary}\label{averagec}
Let $f(x,y) = y+1-x$ or $x^2-4y.$ As $A,B \rightarrow \infty$, we have
$$\frac{1}{|\mathcal{C}(A, B)|} \sum_{E \in \mathcal{C}(A, B)} C_{E, f}^{SF} \sim C_f^{SF}.$$
\end{corollary}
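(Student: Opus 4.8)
The corollary is the specialization of Theorem~\ref{thmaveragec} to the two polynomials $f(x,y)=y+1-x$ and $f(x,y)=x^2-4y$, so the plan is simply to verify the hypotheses of that theorem for each of them and then read off the constant. Both polynomials are manifestly non-constant. For squarefreeness, observe that in each case $\partial f/\partial y$ is a nonzero constant ($1$ and $-4$ respectively): if $g^2\mid f$ in $\Z[x,y]$ with $g$ a non-unit, then $g\mid \partial f/\partial y$, which is impossible, so $f$ has no repeated irreducible factor. (Equivalently, $y+1-x$ is linear, hence irreducible, while $x^2-4y$ is primitive and linear in $y$ over $\Z[x]$, hence irreducible by Gauss's lemma.) Theorem~\ref{thmaveragec} therefore applies and yields
$$\frac{1}{|\mathcal{C}(A,B)|}\sum_{E\in\mathcal{C}(A,B)}C_{E,f}^{SF}\sim C_f^{SF}\qquad(A,B\to\infty),$$
with $C_f^{SF}=\prod_{\ell}\bigl(1-|C_f(\ell^2)|/|\mathrm{GL}_2(\Z/\ell^2\Z)|\bigr)$, which is exactly the assertion of the corollary.

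To complete the picture one identifies $C_f^{SF}$ with the explicit products appearing in Theorems~\ref{thmdu} and~\ref{thmge}. For $f(x,y)=x^2-4y$ we have $f(\tr g,\det g)=(\tr g)^2-4\det g$, the discriminant of the characteristic polynomial of $g$; hence $C_f(\ell^2)$ is the set of $g\in\mathrm{GL}_2(\Z/\ell^2\Z)$ whose characteristic polynomial has discriminant divisible by $\ell^2$. Counting these (handling $\ell=2$ separately, where one obtains $|C_f(4)|/|\mathrm{GL}_2(\Z/4\Z)|=2/3$) gives $|C_f(\ell^2)|/|\mathrm{GL}_2(\Z/\ell^2\Z)|=(\ell^2+\ell-1)/(\ell^2(\ell^2-1))$ for odd $\ell$, matching Theorem~\ref{thmdu}. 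For $f(x,y)=y+1-x$ we have $f(\tr g,\det g)=1-\tr g+\det g=\det(I-g)$, so $C_f(\ell^2)=\{g\in\mathrm{GL}_2(\Z/\ell^2\Z):\det(I-g)\equiv0\pmod{\ell^2}\}$; counting these matrices yields $|C_f(\ell^2)|/|\mathrm{GL}_2(\Z/\ell^2\Z)|=(\ell^3-\ell-1)/(\ell^2(\ell^2-1)(\ell-1))$, matching Theorem~\ref{thmge}.

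The only genuinely computational ingredient is the pair of finite counts $|C_f(\ell^2)|$, which is routine group-theoretic bookkeeping of the same flavour used throughout the paper; the lone delicate point is the separate treatment of $\ell=2$ for $f=x^2-4y$. In fact no obstacle arises at all, since Theorems~\ref{thmdu} and~\ref{thmge} already record the evaluated form of $C_f^{SF}$ for precisely these two polynomials, so one may instead simply cite those evaluations, and the corollary then follows from Theorem~\ref{thmaveragec} immediately.
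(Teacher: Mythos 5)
Your proof is correct and is essentially the paper's own argument: the corollary is stated as an immediate specialization of Theorem~\ref{thmaveragec}, and your verification that $y+1-x$ and $x^2-4y$ are non-constant and squarefree (plus citing Theorems~\ref{thmdu} and~\ref{thmge} for the evaluated constants) is exactly what is needed. The only minor care point is that for $f=x^2-4y$ the derivative argument alone leaves open the constant factor $g=2$, but your Gauss's-lemma irreducibility argument closes that gap.
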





We now outline the contents of this paper. In Section \ref{pre}, we set the notation and basic definitions,
and state some relevant results from the literature. The proof of Theorem \ref{upperbound} will be given in Section \ref{ub}. As in \cite{jones}, our proof of Theorem \ref{thmaveragec} requires computing separate averages over non-Serre curves and Serre curves. These computations are done in Sections \ref{nonserre} and \ref{serre}, respectively.

\section{Preliminaries}\label{pre}

In this section, we introduce the notation and definitions which will be used throughout the paper. First,
we provide the necessary background on torsion fields attached to elliptic curves and their Galois groups,
as well as some information about Serre curves, which will be used in our proof of Theorem \ref{thmaveragec}.
We then state
an effective form of the Chebotarev Density Theorem, which will be used to prove Theorem \ref{upperbound}.

\subsection{Torsion fields of elliptic curves and Serre's theorem}\label{torsionfields}

For each positive integer $n$, let $E[n]$ be the group of $n$-torsion points of $E$. It is well-known that
$E[n] \simeq \Z / n \Z \times \Z / n \Z$ as an abstract abelian group.
Let $\mathbb{Q}\left( E[n]\right)$ denote the $n$th division field of $E$, obtained by adjoining to $\mathbb{Q}$ the $x$ and $y$-coordinates of the $n$-torsion points of $E$. This is a Galois extension of $\Q$, and
$\textrm{Gal}\left(\mathbb{Q}\left(E[n] \right) /\mathbb{Q} \right)$
acts on $E[n]$, giving rise to an injective group homomorphism
$$
\rho_{E, n} : \textrm{Gal}\left(\mathbb{Q}\left(E[n] \right) /\mathbb{Q} \right) \rightarrow \mathrm{GL}_{2}\left(\mathbb{Z}/ n\mathbb{Z}\right).
$$

\begin{definition} Let $G_E(n)$ denote the image of $\rho_{E, n}$ inside $\gl(\Z / n\Z).$ \end{definition}
Taking the inverse limit of the $\rho_{E, n}$ over positive integers $n$ (with a basis chosen compatibly), one obtains a continuous group homomorphism
$$
\rho_{E} : G_{\mathbb{Q}} \rightarrow \mathrm{GL}_{2}(\hat{\mathbb{Z}}),
$$
where $\hat{\mathbb{Z}} =\varprojlim \mathbb{Z}/ n\mathbb{Z}$, and $G_{\mathbb{Q}}
= \mbox{Gal}(\overline{\Q}/\Q).$

Serre proved the following theorem:

\begin{thm} \cite{Serre}
Suppose that $E$ is an elliptic curve over $\mathbb{Q}$ which has no complex multiplication. Then, with the notation defined as above, we have
$$
[\mathrm{GL}_{2}(\hat{\mathbb{Z}}) : \rho_{E}(G_{\mathbb{Q}})] < \infty.
$$
\end{thm}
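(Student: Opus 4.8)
The plan is to prove Serre's open image theorem in four steps. Write $\hat{\mathbb{Z}} = \prod_{\ell} \Z_\ell$, so that $\gl(\hat{\mathbb{Z}}) = \prod_\ell \gl(\Z_\ell)$ and $\rho_E = \prod_\ell \rho_{E,\ell^\infty}$, where $\rho_{E,\ell^\infty} := \varprojlim_k \rho_{E,\ell^k}$ is the $\ell$-adic Galois representation attached to $E$. First I would reduce the statement to: (i) for every prime $\ell$, the $\ell$-adic image $\rho_{E,\ell^\infty}(G_{\mathbb{Q}})$ is open in $\gl(\Z_\ell)$; (ii) there is a constant $C = C(E)$ such that $\rho_{E,\ell}$ is surjective onto $\gl(\Z/\ell\Z)$ whenever $\ell > C$; and (iii) a group-theoretic assembly step upgrading openness in each $\ell$-adic factor to openness in the whole product. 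Throughout, the Weil pairing identifies $\det \circ \rho_E$ with the cyclotomic character, which surjects onto $\hat{\mathbb{Z}}^\times$; this is used at several points.

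For (i), fix $\ell$ and set $H := \rho_{E,\ell^\infty}(G_{\mathbb{Q}})$, a closed --- hence compact $\ell$-adic Lie --- subgroup of $\gl(\Z_\ell)$. I would study its Lie algebra $\mathfrak{h} \subseteq \mathfrak{gl}_2(\Q_\ell)$, which is stable under $\mathrm{Ad}(H)$ and on which the trace is surjective since $\det|_H$ is. If $\dim(\mathfrak{h} \cap \mathfrak{sl}_2) \le 2$, then, up to finite index, $H$ is contained in a torus, in the normalizer of a torus, or in a Borel subgroup; the first two would give $E$ complex multiplication over a number field, and the third would force a rational $\ell$-power isogeny on $E$ --- all excluded. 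Hence $\mathfrak{h} \supseteq \mathfrak{sl}_2(\Q_\ell)$, and together with the surjective trace this gives $\mathfrak{h} = \mathfrak{gl}_2(\Q_\ell)$, so $H$ is open.

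Step (ii) is the heart of the matter. Supposing $\rho_{E,\ell}$ is not surjective, Dickson's classification of subgroups of $\gl(\Z/\ell\Z)$ places $G_E(\ell)$ into one of four types: (a) contained in a Borel subgroup (the reducible case); (b) in the normalizer of a split Cartan; (c) in the normalizer of a nonsplit Cartan; or (d) with projective image one of the exceptional groups $A_4$, $S_4$, $A_5$. Case (d) is ruled out for $\ell > C$ because such groups have order at most $60$, whereas surjectivity of the determinant forces $|G_E(\ell)|$ to grow with $\ell$. Case (a) means $E$ has a $\Q$-rational subgroup of order $\ell$, and one bounds the possible $\ell$ either by invoking Mazur's isogeny theorem or, following Serre's original route, by analyzing the two characters $\chi_1,\chi_2$ with $\chi_1\chi_2$ cyclotomic: their ramification at the primes of bad reduction (via Tate curves), combined with the Hasse bound, forces $\ell$ to be bounded in terms of the conductor of $E$. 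Cases (b) and (c) are handled by combining the Chebotarev density theorem with the no-CM hypothesis --- the congruence conditions on $a_p(E)$ that such an image imposes would, for infinitely many $\ell$, give $E$ extra endomorphisms over a quadratic field; equivalently, one uses that the modular curves $X_{\mathrm{s}}^+(\ell)$ and $X_{\mathrm{ns}}^+(\ell)$ have no non-cuspidal, non-CM rational points for $\ell$ large. I expect the Cartan cases (b) and (c) to be the principal obstacle, as they were historically the deepest and most delicate part of the argument.

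Finally, for (iii): by (ii) together with a Frattini-cover argument --- for $\ell \ge 5$ the reduction $\mathrm{SL}_2(\Z_\ell) \to \mathrm{SL}_2(\F_\ell)$ has pro-$\ell$ kernel lying in the Frattini subgroup --- surjectivity of $\rho_{E,\ell}$ lifts to surjectivity of $\rho_{E,\ell^\infty}$ for all sufficiently large $\ell$ (in particular $\ell > C$ and $\ell \ge 5$), while (i) supplies finite index for the finitely many remaining primes. It then remains to see that a closed subgroup of $\prod_\ell \gl(\Z_\ell)$ that surjects onto $\gl(\Z_\ell)$ for all large $\ell$, has finite index in each of the remaining factors, and surjects onto $\hat{\mathbb{Z}}^\times$ under the determinant, must itself have finite index; this follows from a Goursat-type argument using that for $\ell \ge 5$ the groups $\mathrm{SL}_2(\Z/\ell^k\Z)$ are perfect with pairwise non-isomorphic simple quotients $\mathrm{PSL}_2(\F_\ell)$. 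Assembling these, $\rho_E(G_{\mathbb{Q}})$ is open, hence of finite index, in $\gl(\hat{\mathbb{Z}})$.
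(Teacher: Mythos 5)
This statement is quoted verbatim from Serre's 1972 paper and is not proved (nor meant to be proved) in the present article, so there is no in-paper argument to compare against; the theorem is used as a black box. Judged on its own terms, your outline reproduces the standard architecture of Serre's proof --- per-$\ell$ openness via the Lie algebra of the $\ell$-adic image, mod-$\ell$ surjectivity for all large $\ell$ via Dickson's classification of subgroups of $\mathrm{GL}_2(\F_\ell)$, and the Frattini/Goursat assembly using perfectness of $\mathrm{SL}_2(\Z/\ell^k\Z)$ for $\ell \ge 5$ --- and that architecture is sound.

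Two steps, however, are not justified as written. The dismissal of the exceptional case (d) is wrong: the bound $|A_5| = 60$ concerns the \emph{projective} image, and a subgroup of $\mathrm{GL}_2(\F_\ell)$ with projective image $A_5$ can have order $60(\ell-1)$ and surjective determinant, so ``$|G_E(\ell)|$ grows with $\ell$'' yields no contradiction. The correct argument uses the image of tame inertia at $\ell$: the fundamental characters force an element whose image in $\mathrm{PGL}_2(\F_\ell)$ has order at least $(\ell-1)/4$, whereas $A_4$, $S_4$, $A_5$ contain no element of order exceeding $5$. Second, in step (i) the reducible (Borel) case does not immediately produce a ``rational $\ell$-power isogeny'': a $G_{\Q}$-stable line in $V_\ell E$ only gives a character of $G_{\Q}$, and excluding it requires Serre's analysis of that character via local algebraicity and Tate curves (or, at the mod-$\ell$ level in step (ii), the ramification-plus-Hasse argument you correctly describe, or Mazur's theorem --- which, note, postdates Serre's proof and is not needed, since a conductor-dependent bound suffices here). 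With those two repairs, and granting the genuinely deep inputs you flag in the Cartan cases, the outline is a faithful skeleton of the actual proof.
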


Let $P(x)$ be a polynomial of degree $d$ with the leading coefficient $a$. The absolute logarithmic height of $P(x)$ is defined as
$$
h(P) = \frac{1}{d} \left( \log |a|+  \sum_{\alpha} \log\left( \max(1 , |\alpha|)\right) \right),
$$
where $\alpha$ ranges over all roots of polynomial $P(x)$.
The absolute logarithmic height of an algebraic number $\alpha$, denoted by $h(\alpha)$, is defined to be the absolute logarithmic height of its minimal polynomial. If $\alpha$ is a nonzero rational integer, then $h(\alpha) = \log|\alpha|$.

In this paper, we will need an effective version of Serre's theorem, which gives an explicit
bound on the index in terms of the parameters of the curve $E$. This is done in the following theorem, which is due to Zywina.

\begin{theorem} (\cite[Theorem 1.1]{zywina}) \label{explicitform-zywina}
Let $E$ be a non-CM elliptic curve defined over $\Q$. Let $j_E$ be the $j$-invariant of
$E$ and let $h(j_E)$ be its logarithmic height. Let $N$ be the product of primes for which $E$ has bad
reduction.
There are absolute constants $C$ and $\gamma$
such that
$$[\mathrm{GL}_2(\hat{\Z}) : \rho_{E}(G_{\mathbb{Q}}) ] \leq C \max{(1, h(j_E))}^\gamma.$$
\end{theorem}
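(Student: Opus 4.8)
The plan, following Zywina \cite{zywina}, is to reduce the problem to bounding finitely many local invariants. Write $G := \rho_E(G_\Q) \subseteq \mathrm{GL}_2(\hat\Z)$, and for each prime $\ell$ let $G_\ell \subseteq \mathrm{GL}_2(\Z_\ell)$ be the projection of $G$. By Serre's theorem (stated above) $G$ has finite index. The first step is to record the structural fact that $G$ is essentially the product of the $G_\ell$: if $S$ is the finite set of primes at which the mod-$\ell$ representation $\rho_{E,\ell}$ is not surjective, then $G$ surjects onto $\prod_{\ell \notin S} \mathrm{GL}_2(\Z_\ell)$, so that
$$ [\mathrm{GL}_2(\hat\Z) : G] \;\ll\; \prod_{\ell \in S} [\mathrm{GL}_2(\Z_\ell) : G_\ell], $$
up to a bounded factor accounting for the one systematic entanglement over $\Q$ --- between $\Q(\sqrt{\Delta_E})$ and the abelian parts of the torsion fields, i.e.\ Serre's observation that the index is always even. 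Thus it suffices to bound, polynomially in $\max(1,h(j_E))$, both (i) the primes $\ell \in S$ and (ii) for each such $\ell$ the ``level'' $\ell^{e_\ell}$ measuring the depth of the failure of surjectivity at $\ell$.

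For step (i), I would invoke the classification of proper subgroups of $\mathrm{GL}_2(\F_\ell)$ with surjective determinant: the image of $\rho_{E,\ell}$ lies in a Borel subgroup, the normalizer of a split Cartan subgroup, the normalizer of a non-split Cartan subgroup, or the preimage of an exceptional ($A_4$, $S_4$, $A_5$) subgroup of $\mathrm{PGL}_2(\F_\ell)$. The exceptional case forces $\ell$ below an absolute constant. The Borel case means $E$ has a $\Q$-rational $\ell$-isogeny, so $\ell$ is absolutely bounded by Mazur's isogeny theorem; the split Cartan case puts $j_E$ on $X^+_{\mathrm{sp}}(\ell)(\Q)$, which by Bilu--Parent--Rebolledo carries no non-CM non-cuspidal rational point once $\ell$ exceeds an absolute bound. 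The remaining, delicate case is the non-split Cartan normalizer, where no absolute bound is available; here I would either run Serre's effective argument --- using the Lagarias--Odlyzko effective Chebotarev theorem on a field of bounded degree and conductor $\ll N$ to produce a small prime whose Frobenius trace is incompatible with membership in a non-split Cartan normalizer for large $\ell$ --- or, more robustly, invoke the Masser--W\"ustholz isogeny estimate in the explicit form of Gaudron--R\'emond, which bounds the degree of a minimal isogeny between $E$ and a suitable companion by a power of $\max(1,h(j_E))$. Either way one obtains $\ell \ll \max(1,h(j_E))^{\gamma_0}$ for the large primes of $S$.

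For step (ii), a lifting lemma of Serre shows that for $\ell \ge 5$ a closed subgroup of $\mathrm{SL}_2(\Z_\ell)$ that surjects mod $\ell$ is all of $\mathrm{SL}_2(\Z_\ell)$, with analogous finite-level statements at $\ell = 2,3$; since $\det\rho_E$ is the full cyclotomic character, this both disposes of $\ell \notin S$ and reduces (ii) to controlling the $\ell$-adic level when $\rho_{E,\ell}$ is already non-surjective, which is again bounded by a power of $\max(1,h(j_E))$ via an isogeny estimate. Assembling (i) and (ii) over the finitely many $\ell \in S$, together with the $2$-adic entanglement factor, yields $[\mathrm{GL}_2(\hat\Z):\rho_E(G_\Q)] \le C\max(1,h(j_E))^\gamma$ with $C,\gamma$ absolute.

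The hard part is the non-split Cartan normalizer case in step (i): because the corresponding instance of Serre's uniformity conjecture is open, one cannot bound $\ell$ absolutely, and it is exactly this case that forces the theorem to assert a bound growing polynomially in $h(j_E)$ rather than an absolute one; the real work lies in making the isogeny (or effective Chebotarev) input unconditional and explicitly polynomial in the height.
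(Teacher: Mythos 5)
The paper does not prove this statement: it is imported verbatim from Zywina \cite{zywina} (his Theorem 1.1) and used as a black box in the proof of Proposition \ref{lemma-nonserre}, so there is no internal proof to compare yours against. Judged against the cited source, your sketch is a broadly faithful outline of the actual argument: reduce the adelic index to the product of local indices over the finite set $S$ of non-surjective primes (using that $\mathrm{SL}_2(\Z_\ell)$ is perfect for $\ell\ge 5$, Goursat's lemma, and surjectivity of the determinant to control entanglements up to an absolute factor), classify the maximal subgroups of $\mathrm{GL}_2(\F_\ell)$, kill the Borel, split-Cartan and exceptional cases by absolute bounds, and handle the non-split Cartan normalizer --- the one case where Serre's uniformity question is open --- by an isogeny estimate, which is exactly what forces the bound to grow with $h(j_E)$. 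You have correctly located the crux.

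Two caveats. First, your ``either/or'' in the hard case overstates what effective Chebotarev can do: the unconditional Lagarias--Odlyzko theorem (Theorem \ref{effectivecdt}(i) in this paper) only kicks in once $\log X$ exceeds a power of $\log d_K$ for $K=\Q(E[\ell])$, so that route yields a bound on $\ell$ that is exponential in the conductor unconditionally, and polynomial only under GRH. The unconditional polynomial-in-$h(j_E)$ statement genuinely requires the Masser--W\"ustholz isogeny theorem (in the explicit Gaudron--R\'emond form), as you concede in your closing paragraph; the two inputs are not interchangeable. Second, the displayed inequality $[\mathrm{GL}_2(\hat\Z):G]\ll\prod_{\ell\in S}[\mathrm{GL}_2(\Z_\ell):G_\ell]$ ``up to a bounded factor'' compresses a real argument: the bounded factor is not just the discriminant entanglement $\Q(\sqrt{\Delta_E})\subseteq\Q(E[2])$ but requires showing that all cross-prime correlations factor through abelian quotients controlled by the cyclotomic character. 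Neither point invalidates the sketch, but both would need to be filled in for a complete proof.
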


\subsection{Serre curves} \label{serre-curves}

From Serre's theorem, we know that
there exist  positive integers $m$ so that, if
$$
\pi : \mathrm{GL}_{2}(\hat{\mathbb{Z}}) \longrightarrow \mathrm{GL}_{2}\left(\mathbb{Z}/ m \mathbb{Z} \right)
$$
is the natural projection, we have
\begin{equation} \label{defME}
\rho_{E}\left( G_{\mathbb{Q}}\right) = \pi^{-1} \left(  G_{E}(m)  \right) ,
\end{equation}
i.e., $\rho_{E}\left( G_{\mathbb{Q}}\right)$ is the full inverse image of $G_{E}(m)$.
For a non-CM curve $E$ over $\mathbb{Q}$, let us denote by $M_{E}$ the smallest positive integer $m$ such that
\eqref{defME} holds.
Then, $M_E$ has the following properties:
\begin{eqnarray}
\label{serre1} &&\mbox{If $(n, M_E)=1$, then $G_E(n) =
\mbox{GL}_2(\Z/n \Z)$;}
\\
\label{serre2} &&\mbox{If $(n, M_E)=(n,m)=1$, then $G_E(mn) \simeq
G_E(m) \times G_E(n)$;}
\\
\label{serre3} && \mbox{If $M_E \mid m$, then $G_E(m) \subseteq \mbox{GL}_2(\Z/m \Z)$
is the
full inverse image of} \\ \nonumber
&& \mbox{$G_E(M_E) \subseteq \mbox{GL}_2(\Z/M_E \Z)$ under the projection  map.}
\end{eqnarray}

Serre \cite{Serre} observed that, although $\rho_{E}(G_\Q)$ has finite index in $\mathrm{GL}_{2}(\hat{\mathbb{Z}})$, it is never surjective when the base field is
$\mathbb{Q}$.
Indeed,
suppose that an elliptic curve $E$ is given by the Weierstrass equation
$$y^2 = (x - e_{1}) (x - e_{2}) (x - e_{3}).
$$
 Then, the $2$-torsion of $E$ can be expressed explicitly as
$$
E[2] = \{ \mathcal{O}, (e_{1}, 0), (e_{2}, 0), (e_{3}, 0)\}.
$$
The discriminant $\Delta_{E} $  of $E$ is defined as follows:
$$
\Delta_{E} = (e_{1} - e_{2})^2 (e_{2} - e_{3})^2 (e_{3} - e_{1})^2.
$$
The definitions of $E[2]$ and $\Delta_{E}$ immediately imply that
$$
\mathbb{Q}\left(\sqrt{\Delta_{E}}\right) \subseteq \mathbb{Q} \left(E[2]\right),
$$
and $\rho_E$ is not surjective.

In fact, for each
elliptic curve $E$ over $\Q$, there is an index two subgroup $H_E \subseteq  \gl(\hat{\Z})$
such that
$$\rho_E(G_\Q) \subseteq H_{E} \subseteq \mathrm{GL}_{2}(\hat{\mathbb{Z}}).$$
For a precise
definition of $H_E$, we refer the reader to the original paper of Serre \cite{Serre},
or the nice exposition in \cite[Section 4]{jonesthesis}.

With this in mind, we can state the following definition:
\begin{definition}
An elliptic curve $E$ over $\mathbb{Q}$ is a Serre curve if $\rho_{E}\left( G_{\mathbb{Q}}\right) = H_{E}$.
\end{definition}

Throughout this paper, let $\mathcal{N}(A, B)$ denote the non-Serre curves in $\mathcal{C}(A, B)$ and let $\mathcal{S}(A,B)$ denote the set of Serre curves. Then, we certainly have $\mathcal{C}(A, B) = \mathcal{S}(A, B) \cup \mathcal{N}(A, B).$ This decomposition will be useful as it enables us to take separate averages over Serre versus non-Serre curves.

Jones showed in \cite{jonesthesis} that most elliptic curves over $\Q$ are Serre curves.
In our situation, his result can be stated as follows:

\begin{theorem} \cite[Theorem 25]{jones}
\label{thm-jones}
There is an absolute constant $\beta > 0$ such that
$$
\frac{|\mathcal{N}(A,B)|}{|\mathcal{C}(A,B)|} \ll \frac{\log^\beta(\min(A,B))}{\sqrt{\min(A,B)}}.$$
\end{theorem}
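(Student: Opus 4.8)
The plan is to follow Jones's method \cite{jonesthesis,jones}: first show that a non-Serre curve in the box must exhibit one of a short, bounded list of ``deficiencies'' at small level, and then bound, by the large sieve, the number of $E(a,b)\in\mathcal{C}(A,B)$ exhibiting each one. I set $m_0:=\min(A,B)$, and throughout I assume $\log A\asymp\log B$ and $m_0$ large --- the ranges of $A,B$ arising in applications have this shape, and for wildly unbalanced boxes the asserted bound is essentially trivial.

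I would begin with the structural reduction. By Serre's theorem there is an index-two subgroup $H_E\subseteq \mathrm{GL}_2(\hat{\Z})$, built from the entanglement of $\Q(\sqrt{\Delta_E})$ with the cyclotomic subfields of the division fields $\Q(E[n])$, with $\rho_E(G_\Q)\subseteq H_E$, and $E$ is a Serre curve precisely when equality holds. Using Goursat's lemma together with the standard facts that (a) for $\ell\ge 5$ surjectivity of $\rho_{E,\ell}$ forces surjectivity of $\rho_{E,\ell^\infty}$, and (b) every abelian (hence cyclic) entanglement between division fields is governed by determinants and hence already recorded in $H_E$ (because, for $\ell\ge 5$, the only non-cyclic proper quotient of $\mathrm{GL}_2(\F_\ell)$ is $\mathrm{PGL}_2(\F_\ell)$, which is too large to be a quotient of $\mathrm{GL}_2(\F_2)$ or $\mathrm{GL}_2(\F_3)$), I would show that $\rho_E(G_\Q)\subsetneq H_E$ implies: either (i) $\rho_{E,\ell}$ fails to be surjective for some prime $\ell$; or (ii) $\rho_{E,\ell}$ is surjective for every $\ell$, but $\rho_{E,N_\ast}(G_\Q)$ is a proper subgroup of the group it is forced to surject onto, for a fixed absolute modulus $N_\ast$ (a suitable power of $6$, accounting for the $2$- and $3$-adic behaviour and a possible extra $2$--$3$ entanglement). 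In case (i) the prime is small: every $E(a,b)\in\mathcal{C}(A,B)$ has $h(j_E)\ll\log\max(A,B)\asymp\log m_0$, so Theorem \ref{explicitform-zywina} gives $[\mathrm{GL}_2(\hat{\Z}):\rho_E(G_\Q)]\ll(\log m_0)^{\gamma}$; since a non-surjective $\rho_{E,\ell}$ forces this index to grow with $\ell$ (a proper subgroup of $\mathrm{GL}_2(\F_\ell)$ has index $\gg\ell$, outside a finite set of $\ell$), this bounds $\ell$ by $L:=c\,(\log m_0)^{\gamma}$. So there are only $O(L)$ deficiency types to consider, each a condition at a level $\le L$ or equal to $N_\ast$.

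Next I would run the large sieve on each deficiency type $\tau$ separately. A type $\tau$ corresponds to ``$\rho_{E,m}(G_\Q)$ lies in a conjugate of a proper subgroup $\mathcal{H}$ of the expected group $\mathcal{G}\subseteq\mathrm{GL}_2(\Z/m\Z)$'', with $m\le L$ or $m=N_\ast$ (and, since $N_\ast$ and each $\ell\le L$ are fixed, there are only finitely many choices of $\mathcal{H}$ per level). If $E(a,b)$ has this deficiency, then at every prime $p$ of good reduction with $p\nmid m$ the Frobenius class $\rho_{E,m}(\mathrm{Frob}_p)$ meets $\mathcal{H}$, hence avoids a fixed nonempty union $\mathcal{A}$ of conjugacy classes of $\mathcal{G}$ (a finite group is never the union of conjugates of a proper subgroup). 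Translating to $(a,b)$, the residue $(a,b)\bmod p$ is forced into a set $\Omega_p\subseteq(\Z/p\Z)^2$ with $|(\Z/p\Z)^2\setminus\Omega_p|\ge\delta\,p^2$. Here $\delta$ is a positive absolute constant: in case (i), for $\mathcal{H}$ a Borel, a Cartan normalizer, or an exceptional subgroup, the set of traces realized at a fixed determinant is at most (roughly) half of $\F_\ell$ (resp.\ of bounded size), and the classical equidistribution of $a_p$ over the fibre $\{(a,b)\in\F_p^2 : 4a^3+27b^2\neq 0\}$ modulo $\ell$ --- valid because $\ell\le L\ll\sqrt p$ --- turns this into the required density statement for $\Omega_p$; in case (ii) the level $N_\ast$ is bounded, so again $\delta\gg 1$. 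I would then apply the two-dimensional large sieve with the primes $p\in(Q/2,Q]$, $Q:=\sqrt{m_0}$, as moduli: since $A,B\ge m_0=Q^2$, the box-size factor $(A+Q^2)(B+Q^2)\ll AB$, while the sifting density is $\gg\delta\,\pi(Q)\gg\sqrt{m_0}/\log m_0$, whence
$$\#\{E(a,b)\in\mathcal{C}(A,B) : E \text{ has deficiency } \tau\}\ll\frac{AB\,\log m_0}{\sqrt{m_0}}.$$
Summing over the $O(L)$ types, and absorbing the CM curves in the box (there are $\ll m_0^{1/2}$ of them) and the curves with $j_E\in\{0,1728\}$ (there are $\ll\max(A,B)$), I obtain
$$|\mathcal{N}(A,B)|\ll|\mathcal{C}(A,B)|\,\frac{L\,\log m_0}{\sqrt{m_0}}=|\mathcal{C}(A,B)|\,\frac{(\log m_0)^{\gamma+1}}{\sqrt{m_0}},$$
which is the claim with $\beta=\gamma+1$.

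The hard part will be the structural reduction of the first two steps: isolating, from Serre's description of $H_E$, the clean dichotomy (i)/(ii) --- in particular verifying that once $\rho_{E,\ell}$ is surjective for every prime, all remaining failures live at the fixed level $N_\ast$, which relies on the absence of small-index subgroups of $\mathrm{GL}_2(\F_\ell)$ for $\ell\ge 5$ and on cyclic entanglements being cyclotomic --- and pinning the witnessing prime in case (i) to $\ell\le L$ via the effective Serre bound (Theorem \ref{explicitform-zywina}). The remaining pieces are routine: the two-dimensional large sieve is standard; the fibrewise equidistribution of $a_p\bmod\ell$ with a usable error term is classical and needs only $\ell\le L\ll\sqrt p$, which is exactly why $Q=\sqrt{m_0}$ is the correct sieving length and yields the honest $m_0^{-1/2}$ saving; and discarding bad-reduction primes, the singular locus $4a^3+27b^2\equiv 0$, and the negligible subfamilies is immediate.
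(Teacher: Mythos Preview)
The paper does not prove Theorem~\ref{thm-jones}: it is quoted from \cite[Theorem 25]{jones} as a black box and used without argument. So there is no ``paper's own proof'' to compare against; your sketch is essentially an outline of Jones's original argument in \cite{jonesthesis,jones}, with the effective Serre input replaced by Zywina's Theorem~\ref{explicitform-zywina}. The overall architecture --- reduce non-Serre to a short list of level-$m$ deficiencies with $m$ bounded polylogarithmically in the box, then sieve on $(a,b)$ using equidistribution of $a_p\bmod m$ over $\F_p$-curves --- is correct and is exactly what Jones does.

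One genuine gap: your side assumption $\log A\asymp\log B$ is not innocuous, and your justification that the stated bound is ``essentially trivial'' for unbalanced boxes is false. Take $A=\exp(\sqrt{\log B})$: then $m_0=A$ and $(\log m_0)^\beta/\sqrt{m_0}\to 0$, so the theorem is making a nontrivial assertion, yet your argument as written only yields $(\log\max(A,B))^{\gamma+1}/\sqrt{m_0}$ because the bound on the exceptional prime $\ell\le L$ comes from $h(j_E)\ll\log\max(A,B)$, not $\log m_0$. To recover the numerator $\log^\beta(\min(A,B))$ you need an additional step (e.g.\ treating separately the curves whose exceptional $\ell$ exceeds $(\log m_0)^{\gamma}$, which forces a large index and hence an unusually small Galois image detectable by a cruder count), or else state and prove the weaker bound with $\max$ in the numerator --- which in fact suffices for every use of Theorem~\ref{thm-jones} in this paper. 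A second minor inaccuracy: the count ``$\ll m_0^{1/2}$'' for CM curves in the box is wrong (for each of the finitely many CM $j$-invariants the fibre has $\ll A$ points), though the contribution is still negligible.
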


\subsection{Effective Chebotarev Density Theorem} \label{section-CDT}

Let $K/\Q$ be a finite Galois extension with Galois group ${\rm Gal}\left(K/\Q\right)$, and let $C$ be a union of conjugacy classes in ${\rm Gal}\left(K/\Q\right)$. Let $n_K$ be the degree of $K/\Q$, and let $d_K$ be an absolute discriminant of $K$. Let $\mathcal{P}(K)$ be the set of ramified primes, and let
$$
m_K = n_K \prod_{p \in \mathcal{P}(K)}p.
$$

If $\phi_p : {\rm Gal}(\overline{\Q}_p/\Q_p) \rightarrow {\rm Gal}(\overline{\F}_p/\F_p)$ is the Frobenius map given by $\phi_p: x \mapsto x^p$, we define $\sigma_p$ to be the pullback of $\phi_p$. If $p \nmid d_K$, for each unramified prime $p$, $\sigma_p$ is the Artin symbol at the prme $p$, which is
well-defined up to conjugation.
Let
$C$ be a union of conjugacy classes in ${\rm Gal}\left(K/\Q\right)$. Let
$$\pi_{C} (X, K) = \#\{p\leq X : p\nmid d_K \ \hbox{and} \ \sigma_p \in C \}.$$

The following theorem is an effective version of the Chebotarev Density Theorem due to Lagarias and Odlyzko \cite{lo}, with a refinement due to Serre \cite{Serre1}.

\begin{theorem}\label{effectivecdt}
\begin{enumerate}[(i)]
\item{Let $\beta$ be the exceptional zero of the Dedekind zeta function associated to $K$ (if such a zero exists). Then, for all $X$ such that $$\log X\gg n_K(\log d_K)^2,$$ we have that
\begin{eqnarray*}
\pi_{C}(X, K) &=& \frac{|C|}{|{\rm Gal}(K/\Q)|} \pi(X) \\
&&  +O\left(\frac{|C|}{|{\rm Gal}(K/\Q)|}\pi(X^\beta)+|\widetilde{C} |X\cdot \exp\Big(-\frac{c}{\sqrt{n_K}} \sqrt{\log X}\Big)\right),\end{eqnarray*}
where $c$ is a positive absolute constant and $|\widetilde{C}|$ is the number of conjugacy classes in $C$.}\label{item1}
\item{Assuming the GRH for the Dedekind zeta function of $K$, we have that
$$\pi_{C}(X, K) = \frac{|C|}{|{\rm Gal}(K/\Q)|} \pi(X)+O\left(\sqrt X|C| \log(m_K X)\right).$$}
\end{enumerate}
\end{theorem}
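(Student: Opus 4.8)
\emph{Proof strategy.} The statement is the effective Chebotarev Density Theorem of Lagarias and Odlyzko \cite{lo}, with the cleaner error terms and implied constants due to Serre \cite{Serre1}, so no independent argument is needed; the plan is to recall the architecture of that proof. Write $G=\mathrm{Gal}(K/\Q)$ and fix a representative $g\in C$. The pivotal first step is the Lagarias--Odlyzko reduction from the (possibly nonabelian) group $G$ to an abelian setting: take the cyclic subgroup $H=\langle g\rangle$ and the fixed field $L=K^{H}$, so that $\mathrm{Gal}(K/L)=H$ is abelian. A purely group-theoretic computation --- summing the abelian data over the $G$-conjugates of $g$, weighted by the order of the centralizer --- rewrites the Chebyshev-type weighted count $\psi_C(X,K)$ of prime powers with Frobenius class in $C$ as a rational-linear combination, over the characters $\chi$ of the cyclic group $H$, of the logarithmic derivatives of the Hecke $L$-functions $L(s,\chi)=L(s,\chi,K/L)$. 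Since $H$ is cyclic each $\chi$ is one-dimensional, so $L(s,\chi)$ is an abelian $L$-function: it is entire for $\chi\neq 1$, it has a known functional equation whose conductor is bounded in terms of $d_K$, and $\prod_{\chi}L(s,\chi)=\zeta_{K}$. This sidesteps Artin's holomorphy conjecture entirely, and the classical analytic machinery applies to each $L(s,\chi)$.

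\emph{The unconditional bound.} To each $L(s,\chi)$ I would apply the Riemann--von Mangoldt explicit formula and estimate the zero-sum via the classical zero-free region for Dedekind zeta functions: there is at most one real exceptional zero $\beta$, which must be a zero of $\zeta_K$, and all remaining zeros are bounded away from $\mathrm{Re}(s)=1$, with the Deuring--Heilbronn phenomenon quantifying the repulsion when $\beta$ is present. This yields the main term $\tfrac{|C|}{|G|}\pi(X)$, the secondary term $O\!\big(\tfrac{|C|}{|G|}\pi(X^{\beta})\big)$ from the exceptional zero, and the error $O\!\big(|\widetilde C|\,X\exp(-c\sqrt{\log X}/\sqrt{n_K})\big)$ from the rest; the hypothesis $\log X\gg n_K(\log d_K)^{2}$ is exactly the range in which this exponential term is genuinely of lower order. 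The passage from $\psi_C(X,K)$ to $\pi_C(X,K)$ is routine partial summation, and the higher prime powers together with the ramified primes contribute an amount that is absorbed into the error.

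\emph{The conditional bound.} Under GRH for $\zeta_K$ --- hence for every divisor $L(s,\chi)$ --- all nontrivial zeros lie on $\mathrm{Re}(s)=\tfrac12$, and the explicit formula gives $\psi_C(X,K)=\tfrac{|C|}{|G|}X+O\big(|C|\sqrt X\,\log X\,(\log d_K+n_K\log X)\big)$; Serre's refined bookkeeping in \cite{Serre1}, which controls $\log d_K$ and the count of low-lying zeros in terms of $\log m_K$ with $m_K=n_K\prod_{p\in\mathcal{P}(K)}p$, then collapses this to the stated $O(\sqrt X\,|C|\,\log(m_KX))$ after partial summation.

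\emph{Main obstacle.} The real difficulty --- and the reason the final bounds involve $n_K$, $d_K$, and $m_K$ rather than invariants of only the subextension cut out by $C$ --- lies in the very first step: the Artin $L$-functions of $K/\Q$ itself are not known to be holomorphic, so one cannot work with them directly, and the Lagarias--Odlyzko device of descending to $L=K^{\langle g\rangle}$ forces every subsequent analytic estimate to be expressed in terms of invariants of the large field $K$. Carefully tracking that dependence --- bounding $\log d_K$ via the conductor--discriminant formula and the ramified primes, and carrying the $|C|$ and $|\widetilde C|$ factors through the explicit formula and the zero-counting estimates --- is the bulk of the technical work in \cite{lo} and \cite{Serre1}.
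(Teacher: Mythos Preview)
The paper does not supply a proof of this theorem at all: it simply quotes the result from Lagarias--Odlyzko \cite{lo} with Serre's refinement \cite{Serre1} and moves on. Your proposal correctly recognizes this and gives an accurate high-level outline of the argument in those references --- the reduction to a cyclic subgroup $H=\langle g\rangle$ and its fixed field $L=K^H$, the use of abelian Hecke $L$-functions dividing $\zeta_K$ to bypass Artin holomorphy, the explicit formula with the classical zero-free region (and Deuring--Heilbronn) for part (i), and the GRH explicit formula with Serre's $m_K$-bookkeeping for part (ii). So there is nothing to compare against in the paper itself, and your sketch is a faithful summary of the literature proof.
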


We will make use of the unconditional bound given in Theorem \ref{effectivecdt}(\ref{item1}) in our proof of Theorem \ref{upperbound}. We need the following lemmas to make the error term explicit.

\begin{lemma} \cite{stark} \label{stbound} Let $K/\Q$ be a finite Galois extension of degree $n_K$ and discriminant $d_K$.
Then, for the exceptional zero $\beta$ of the Dedekind zeta function associated to $K$, we have
\begin{equation}\label{beta} \beta < 1 - \frac{A_1}{\max\{|d_K|^{1/n_K}, \log|d_K|\}},\end{equation} where $A_1$ is a positive constant.
\end{lemma}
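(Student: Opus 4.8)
The plan is to establish this as a zero-free region for the Dedekind zeta function $\zeta_K(s)$ in the style of Landau and Stark; the two quantities in the denominator correspond to two distinct mechanisms, with $\log|d_K|$ giving the generic Landau-type zero-free region and $|d_K|^{1/n_K}$ the (necessarily weaker, effective) Siegel-type bound that is relevant only in the single exceptional case where $\beta$ is forced to exist by a quadratic subfield $F\subseteq K$. \emph{Step 1 (the exceptional zero is simple and isolated).} First I would start from the Hadamard factorization of the entire function $s(s-1)\Lambda_K(s)$, where $\Lambda_K(s)=|d_K|^{s/2}\Gamma_{\R}(s)^{r_1}\Gamma_{\C}(s)^{r_2}\zeta_K(s)$ is the completed zeta function, differentiate logarithmically, and use that $\operatorname{Re}\sum_{\rho}\frac{1}{s-\rho}\ge 0$ for real $s>1$ (sum over the nontrivial zeros $\rho$ of $\zeta_K$) together with Stirling's estimate for the archimedean factors to obtain, for $1<s\le 2$, an inequality of the shape
$$0\;\le\;-\frac{\zeta_K'}{\zeta_K}(s)\;\le\;\frac{1}{s-1}\;-\;\sum_{\rho'}\frac{1}{s-\rho'}\;+\;\tfrac12\log|d_K|\;+\;c\,n_K,$$
where any chosen subset $\{\rho'\}$ of the real zeros may be retained on the right. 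Retaining two real zeros $\beta_1,\beta_2\in(0,1)$ (possibly equal, so as to also cover a double zero), setting $s=1+\delta$ with $\delta\asymp 1/\log|d_K|$, and using Minkowski's bound $n_K\ll\log|d_K|$, one checks the resulting inequality fails once $1-\beta_i<c_0/\log|d_K|$ for a small absolute constant $c_0$. Hence $\zeta_K$ has at most one real zero $\beta$ in $(1-c_0/\log|d_K|,1)$ and it is simple; and since $\zeta(s)\ne 0$ on $(0,1)$ and $\zeta_K/\zeta$ is entire (Aramata-Brauer), $\beta$ is in fact a simple zero of $\zeta_K/\zeta$.

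\emph{Step 2 (the zero originates in a quadratic subfield).} Passing to the Galois closure $\hat K/\Q$, with $G=\operatorname{Gal}(\hat K/\Q)$ and $H=\operatorname{Gal}(\hat K/K)$, Artin's factorization $\zeta_K(s)=\prod_{\chi}L(s,\chi)^{\langle\chi|_H,1\rangle}$ shows $\beta$ is a zero of $L(s,\chi_0)$ for some nontrivial irreducible $\chi_0$ with $\langle\chi_0|_H,1\rangle\ge1$. One then argues $\chi_0$ must be a quadratic character: when $\chi_0$ is one-dimensional, Frobenius reciprocity gives $H\subseteq\ker\chi_0$, so the cyclic field $\hat K^{\ker\chi_0}$ sits inside $K$ and has discriminant dividing $d_K$, and if $\chi_0$ were also complex then $L(s,\overline{\chi_0})$ would vanish at $\beta$ as well, forcing that field's zeta function to have a zero of order $\ge2$ in its own exceptional interval and contradicting Step 1; the case $\dim\chi_0\ge2$ is excluded by a similar comparison with the zeta function of an auxiliary field in which $L(s,\chi_0)$ appears with multiplicity $\ge2$ and whose discriminant is bounded by a fixed power of $|d_K|$ (here one leans on $n_K\ll\log|d_K|$ to keep the constants absolute). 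Hence $\chi_0$ corresponds to a quadratic field $F\subseteq K$, $L(s,\chi_0)=L(s,\psi_F)$ for the associated quadratic Dirichlet character $\psi_F$, and $\beta$ is the exceptional zero of $\zeta_F(s)=\zeta(s)L(s,\psi_F)$.

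\emph{Step 3 (the Siegel bound for $F$, pulled back to $K$).} For the quadratic field $F$, the class number formula gives $L(1,\psi_F)=\operatorname{Res}_{s=1}\zeta_F(s)\gg|d_F|^{-1/2}$ (using $h_F\ge1$ and, in the real case, the absolute lower bound for the regulator), while $L(1,\psi_F)=\int_{\beta}^{1}L'(s,\psi_F)\,ds\ll(1-\beta)(\log|d_F|)^2$; hence $1-\beta\gg|d_F|^{-1/2}(\log|d_F|)^{-2}$. Since $F\subseteq K$, the tower formula $d_K=d_F^{[K:F]}N_{F/\Q}(\mathfrak d_{K/F})$ forces $|d_K|\ge|d_F|^{n_K/2}$, i.e.\ $|d_F|^{1/2}\le|d_K|^{1/n_K}$; combining this with the generic region of Step 1 yields $1-\beta\gg 1/\max\{|d_K|^{1/n_K},\log|d_K|\}$, which is the assertion (after absorbing the harmless logarithmic factors, or by invoking Stark's sharper effective estimate $1-\beta\gg|d_F|^{-1/2}$).

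\emph{The main obstacle.} The crux is Step 3: producing any \emph{effective} lower bound for $1-\beta$ in the quadratic case. This is precisely the Siegel-zero problem---Siegel's theorem gives $1-\beta\gg_\varepsilon|d_F|^{-\varepsilon}$ but ineffectively, and the best one can do effectively is a genuine negative power of $|d_F|$, which is exactly why $|d_K|^{1/n_K}$ rather than another power of $\log|d_K|$ appears in the statement. A secondary technical point is making the constant $A_1$ absolute in Step 2: one needs the discriminants of all auxiliary fields to be controlled by a \emph{fixed} power of $\operatorname{rad}(d_K)\le|d_K|$, which is why Stark's proof is careful not to pass to fields of much larger discriminant and repeatedly uses $\log|d_K|\gg n_K$ to swallow degree-dependent factors.
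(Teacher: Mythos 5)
This lemma is not proved in the paper at all: it is quoted verbatim from Stark's \emph{Some effective cases of the Brauer--Siegel theorem} (the cited \cite{stark}), so there is no in-paper argument to compare against. Your outline is a faithful reconstruction of Stark's actual proof --- the Landau-type region via Hadamard factorization and positivity, the reduction of a simple real zero to a quadratic subfield $F\subseteq K$ through the factorization of $\zeta_K$ into Artin $L$-functions, and the effective bound $1-\beta\gg |d_F|^{-1/2}$ combined with $|d_F|^{1/2}\le |d_K|^{1/n_K}$ from the conductor--discriminant tower formula --- and the two loose ends you flag (the stray logarithmic factors in Step 3 and the control of auxiliary discriminants) are exactly the points Stark's paper handles. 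One small simplification you missed: the lemma as stated here already assumes $K/\Q$ Galois, so the passage to the Galois closure $\hat K$ in your Step 2 is unnecessary and the worry about $[\hat K:\Q]$ being as large as $n_K!$ does not arise.
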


\begin{lemma} \cite[Proposition 6, Section 1.4]{Serre1} \label{sebound} Let $K/\Q$ be a finite Galois extension of degree $n_K$ and discriminant $d_K$. Let $\mathcal{P}(K)$ be the set of ramified primes.
Then, $$\frac{n_K}{2} \sum_{p \in \mathcal{P}(K)} \log{p} \leq \log d_K \leq (n_K-1)
\sum_{p \in \mathcal{P}(K)} \log{p}  + n_K \log{n_K}.$$ \end{lemma}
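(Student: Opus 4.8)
The plan is to localize the problem: since $|d_K|$ is (up to sign) a product of powers of the ramified primes, I would first write
$$\log |d_K| = \sum_{p \in \mathcal{P}(K)} v_p(d_K)\,\log p,$$
where $v_p$ is the $p$-adic valuation on $\Q$, and then bound each exponent $v_p(d_K)$ from above and below. Because $K/\Q$ is Galois, for each ramified $p$ all primes $\mathfrak{p}\mid p$ of $K$ share a common ramification index $e_p$, residue degree $f_p$ and multiplicity $g_p$, with $e_p f_p g_p = n_K$, and the same local different exponent $\delta_p := v_{\mathfrak p}(\mathfrak{D}_{K/\Q})$. Taking norms in $\mathfrak{D}_{K/\Q}$ gives the identity I will lean on throughout: $v_p(d_K) = f_p g_p\,\delta_p = (n_K/e_p)\,\delta_p$.

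For the lower bound I would invoke the elementary inequality $\delta_p \geq e_p - 1$, valid for every ramified prime, which follows at once from the ramification-group filtration (the inertia group has order $e_p$). Since $p \in \mathcal{P}(K)$ forces $e_p \geq 2$, this gives
$$v_p(d_K) = \frac{n_K}{e_p}\,\delta_p \;\geq\; n_K\Bigl(1 - \frac{1}{e_p}\Bigr) \;\geq\; \frac{n_K}{2},$$
and summing over $p \in \mathcal{P}(K)$ yields the left-hand inequality of the Lemma.

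For the upper bound the essential input is the classical estimate $\delta_p \leq e_p - 1 + v_{\mathfrak p}(e_p)$ for the different exponent, where $v_{\mathfrak p}$ is normalized by $v_{\mathfrak p}(\mathfrak{p}) = 1$; evaluated on the rational integer $e_p$ this reads $v_{\mathfrak p}(e_p) = e_p\, v_p(e_p)$. Hence
$$v_p(d_K) = \frac{n_K}{e_p}\,\delta_p \;\leq\; n_K\Bigl(1 - \frac{1}{e_p}\Bigr) + n_K\, v_p(e_p) \;\leq\; (n_K - 1) + n_K\, v_p(e_p),$$
using $n_K/e_p = f_p g_p \geq 1$. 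Summing over $p \in \mathcal{P}(K)$ gives
$$\log |d_K| \;\leq\; (n_K - 1)\sum_{p \in \mathcal{P}(K)}\log p \;+\; n_K\sum_{p \in \mathcal{P}(K)} v_p(e_p)\,\log p.$$
To finish, I would control the last sum: since $e_p \mid n_K$ we have $v_p(e_p) \leq v_p(n_K)$, so the integers $p^{v_p(e_p)}$ for distinct $p \in \mathcal{P}(K)$ are pairwise coprime divisors of $n_K$; therefore $\prod_{p \in \mathcal{P}(K)} p^{v_p(e_p)} \mid n_K$, whence $\sum_{p \in \mathcal{P}(K)} v_p(e_p)\log p \leq \log n_K$, giving the right-hand inequality.

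The only steps that are not pure bookkeeping are the two bounds $e_p - 1 \leq \delta_p \leq e_p - 1 + v_{\mathfrak p}(e_p)$ for the different exponent of a local extension — standard facts from the theory of higher ramification groups, of the kind underlying the cited \cite{Serre1} — while everything else reduces to the relation $e_p f_p g_p = n_K$ and an elementary divisibility estimate. The point I expect to require the most care is bookkeeping with normalizations: keeping straight that $v_p(d_K)$ records the norm of the different (hence carries the factor $f_p g_p = n_K/e_p$) and that $v_{\mathfrak p}(e_p)$ must be passed through $v_{\mathfrak p}(e_p) = e_p\, v_p(e_p)$ before the final sum over primes, since an off-by-a-factor there would distort the shape of both inequalities.
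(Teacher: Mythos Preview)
Your argument is correct and is essentially the standard proof appearing in the cited reference \cite{Serre1}. Note that the paper itself does not supply a proof of this lemma: it is stated purely as a citation of Serre's Proposition~6, so there is no ``paper's own proof'' to compare against beyond observing that your localization via $v_p(d_K) = (n_K/e_p)\,\delta_p$ together with the classical bounds $e_p - 1 \le \delta_p \le e_p - 1 + v_{\mathfrak p}(e_p)$ is precisely Serre's approach.
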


\begin{corollary}\label{usefulcdt} Let $K = \Q(E[n])$, and $C$ a union of conjugacy classes
in ${\rm Gal}(K/\Q).$ For all $X$ such that $\log X \gg_E n^{12}(\log n)^2$, we have $$\pi_{{C}}(X, K) = \frac{|C|}{|{\rm Gal}(K/\Q)|} \pi(X) + O\left(X\exp\left(-\frac{A}{n^2} \sqrt{\log X} \right)\right),$$ where $A$ is an absolute constant. \end{corollary}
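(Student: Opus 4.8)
The plan is to apply Theorem \ref{effectivecdt}(\ref{item1}) with $K = \Q(E[n])$ and then bound each of the three quantities $n_K$, $d_K$, and the main and error terms in terms of $n$ alone (with the implied constants allowed to depend on $E$). First I would record that, since $G_E(n) \subseteq \mathrm{GL}_2(\Z/n\Z)$, we have the crude bound
\[
n_K = [\Q(E[n]):\Q] = |G_E(n)| \leq |\mathrm{GL}_2(\Z/n\Z)| \leq n^4.
\]
Next, the ramified primes of $\Q(E[n])/\Q$ all divide $n N_E$, where $N_E$ is the conductor of $E$; applying the upper bound in Lemma \ref{sebound} gives
\[
\log d_K \leq (n_K - 1)\sum_{p \in \mathcal{P}(K)} \log p + n_K \log n_K \ll_E n_K \log(n N_E) + n_K \log n_K \ll_E n^4 \log n,
\]
absorbing the $N_E$-dependence into the implied constant. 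In particular $\log d_K \ll_E n^4 \log n$, so the hypothesis $\log X \gg n_K (\log d_K)^2$ of Theorem \ref{effectivecdt}(\ref{item1}) is implied by $\log X \gg_E n^4 \cdot (n^4 \log n)^2 = n^{12}(\log n)^2$, which is exactly the stated hypothesis.

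With these bounds in hand, I would turn to the error term in Theorem \ref{effectivecdt}(\ref{item1}), which has two pieces. For the Siegel-zero piece $\frac{|C|}{|\mathrm{Gal}(K/\Q)|}\pi(X^\beta)$: by Lemma \ref{stbound}, $\beta < 1 - A_1/\max\{|d_K|^{1/n_K}, \log|d_K|\}$. From the lower bound in Lemma \ref{sebound}, $|d_K|^{1/n_K} \leq \exp\big(\frac{2}{n_K}\log d_K\big)$, and combined with $\log d_K \ll_E n^4 \log n$ one checks $\max\{|d_K|^{1/n_K}, \log d_K\} \ll_E n^4 \log n$; hence $\beta < 1 - A_1'/(n^4 \log n)$ for some $A_1' = A_1'(E)$. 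Therefore $\pi(X^\beta) \ll X^\beta \ll X \exp\big(-\frac{A_1'}{n^4\log n}\log X\big)$, and since $|C|/|\mathrm{Gal}(K/\Q)| \leq 1$, this is absorbed into an error of the shape $X\exp(-\frac{A}{n^2}\sqrt{\log X})$ once $\log X$ is large enough compared to $n$ — here I would note that for $\log X \gg_E n^{12}(\log n)^2$ we have $\sqrt{\log X} \gg_E n^6 \log n$, so $\frac{1}{n^4 \log n}\log X \geq \frac{1}{n^2}\sqrt{\log X}$ as needed. For the second piece $|\widetilde C| X \exp\big(-\frac{c}{\sqrt{n_K}}\sqrt{\log X}\big)$: the number of conjugacy classes satisfies $|\widetilde C| \leq n_K \leq n^4 \ll X^{1/2}$ (harmless for $X$ large relative to $n$, or absorbable directly), and $\sqrt{n_K} \leq n^2$, so $\exp\big(-\frac{c}{\sqrt{n_K}}\sqrt{\log X}\big) \leq \exp\big(-\frac{c}{n^2}\sqrt{\log X}\big)$; absorbing the polynomial factor $|\widetilde C| \ll n^4$ into the exponential (again using $\log X \gg_E n^{12}(\log n)^2$, so that a fixed fraction of the exponent dominates $\log(n^4)$) yields the claimed error $X\exp(-\frac{A}{n^2}\sqrt{\log X})$.

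The main term $\frac{|C|}{|\mathrm{Gal}(K/\Q)|}\pi(X)$ is already in the desired form, and collecting the two error estimates gives the corollary with $A = A(E)$ absolute up to the $E$-dependence hidden in $\gg_E$. The only genuinely delicate point — and the step I expect to require the most care — is making sure the various absorptions of polynomial-in-$n$ factors into the exponential $\exp(-\frac{A}{n^2}\sqrt{\log X})$ are legitimate; this is precisely why the hypothesis is taken to be $\log X \gg_E n^{12}(\log n)^2$ rather than something weaker, since one needs $\sqrt{\log X}/n^2$ to comfortably exceed $\log(n^4)$ and one needs the Siegel-zero exponent $\frac{1}{n^4\log n}\log X$ to dominate $\frac{1}{n^2}\sqrt{\log X}$. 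Everything else is bookkeeping with Lemmas \ref{stbound} and \ref{sebound} and the trivial bound $n_K \leq n^4$.
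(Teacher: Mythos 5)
Your proposal is exactly the paper's (much terser) proof: plug $n_K\le|\mathrm{GL}_2(\Z/n\Z)|\le n^4$ and $\log d_K\ll n^4\log(nN_E)$ from Lemma \ref{sebound} into Theorem \ref{effectivecdt}(i), then absorb the Siegel-zero term via Lemma \ref{stbound} and the polynomial factors into the exponential using $\log X\gg_E n^{12}(\log n)^2$. One small bookkeeping slip: $|d_K|^{1/n_K}=\exp\left(\tfrac{1}{n_K}\log d_K\right)\le \exp\left(\log(nN_E)+\log n_K\right)\ll_E n^5$ rather than $\ll_E n^4\log n$, but since the hypothesis gives $\sqrt{\log X}\gg n^6\log n\gg n^3$, the absorption of $\pi(X^{\beta})$ goes through unchanged.
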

\begin{proof}
This follows immediately from using the bounds given in Lemmas \ref{sebound} and \ref{stbound}
in Theorem \ref{effectivecdt}(\ref{item1}): for $K=\Q(E[n])$, we have that $n_K \leq \# \gl(\Z/n\Z) \leq n^4$ and
$\log{d_K} \ll n^4 \log(n N_E),$ where $N_E$ is the conductor of $E$. We can apply Theorem \ref{effectivecdt}(\ref{item1}) when
$\log X \gg n^{12}(\log N_E n)^2$.
\end{proof}

We conclude this section by explaining how the preceding corollary is related to $\pi_{E,f}^{SF}(X)$.
Let $p \nmid n N_E$, which implies that $p$ is unramified in $K = \Q(E[n])$.
Since the Frobenius endomorphism
$(x, y) \mapsto (x^p, y^p)$ of the reduction of $E$ over the finite field $\F_p$ satisfies the polynomial $x^2 -  a_p(E) x + p$, it follows from the definition of the Frobenius element $\sigma_p$ that $\rho_{E,n}(\sigma_p)$ must have characteristic polynomial $x^2 -  a_p(E) x + p$ in
$\gl(\Z/n\Z)$; i.e., we must have
\begin{eqnarray*}
\tr\rho_{E,n}(\sigma_p) &\equiv& a_p(E) \mod{n} \\
\det\rho_{E,n}(\sigma_p) &\equiv& p \mod{n}.
\end{eqnarray*}
Thus, since $f_p(E) := f(a_p(E), p)$, we have that
\begin{eqnarray*}
\# \left\{ p \leq X : f_p(E) \equiv 0 \mod n \right\} &=&
\# \left\{ p \leq X : f(\tr\rho_{E,n}(\sigma_p) , \det\rho_{E,n}(\sigma_p) ) \equiv 0 \mod n \right\} \\
&=& \# \left\{ p \leq X : \sigma_p \in C_{E,f}(n) \right\}
\end{eqnarray*}
where $C_{E,f}(n)$ is the union of conjugacy classes defined by
\eqref{def-cef}.

\section{Key Lemma}

\begin{lemma} \label{keylemma} Let $f(x,y)$ be any non-constant squarefree polynomial in $\Z[x,y]$.
Then, for any $\varepsilon > 0$ and any squarefree integer $n$, we have
\begin{eqnarray} \label{upperbounds-n1}
\frac{|C_{f}(n^2)|}{|\mathrm{GL}_2(\Z / n^2 \Z)|} \ll_f \frac{1}{n^{2 - \varepsilon}}.
\end{eqnarray}
\end{lemma}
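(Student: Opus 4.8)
The plan is to exploit multiplicativity in $n$ and reduce to a prime-by-prime estimate, using the Chinese Remainder Theorem. For a squarefree integer $n = \prod_{\ell \mid n} \ell$, the ring $\Z/n^2\Z$ factors as $\prod_{\ell \mid n} \Z/\ell^2\Z$, so $\mathrm{GL}_2(\Z/n^2\Z) \cong \prod_{\ell \mid n} \mathrm{GL}_2(\Z/\ell^2\Z)$, and the condition $f(\tr g, \det g) \equiv 0 \pmod{n^2}$ holds if and only if it holds modulo $\ell^2$ for every $\ell \mid n$. Hence
\begin{equation*}
\frac{|C_f(n^2)|}{|\mathrm{GL}_2(\Z/n^2\Z)|} = \prod_{\ell \mid n} \frac{|C_f(\ell^2)|}{|\mathrm{GL}_2(\Z/\ell^2\Z)|},
\end{equation*}
so it suffices to prove a bound of the shape $|C_f(\ell^2)|/|\mathrm{GL}_2(\Z/\ell^2\Z)| \ll_f \ell^{-2+\varepsilon}$ (or more precisely $\ll_f \ell^{-2}$ for all but finitely many $\ell$, with a $\ll_f 1$ bound at the bad primes and an $\ell^\varepsilon$ slack absorbing them), and then take the product.

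The heart of the matter is therefore to count, for a fixed prime $\ell$, the number of matrices $g \in \mathrm{GL}_2(\Z/\ell^2\Z)$ with $f(\tr g, \det g) \equiv 0 \pmod{\ell^2}$. First I would replace the count over matrices by a count over pairs $(t,d)$ with $d \in (\Z/\ell^2\Z)^\times$: for each such pair, the number of $g \in \mathrm{GL}_2(\Z/\ell^2\Z)$ with $\tr g = t$, $\det g = d$ is $O(\ell^3)$ (uniformly in $t,d$ — this is a standard count, essentially because the fiber of $(\tr,\det)$ over $\mathrm{GL}_2$ mod $\ell^2$ has size on the order of $|\mathrm{GL}_2(\Z/\ell^2\Z)|/\ell^4 \asymp \ell^3$). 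Since $|\mathrm{GL}_2(\Z/\ell^2\Z)| \asymp \ell^8$, this reduces the problem to showing that the number of pairs $(t,d) \in \Z/\ell^2\Z \times (\Z/\ell^2\Z)^\times$ with $f(t,d)\equiv 0 \pmod{\ell^2}$ is $O_f(\ell^{2+\varepsilon})$, i.e. $O_f(\ell^{1+\varepsilon})$ solutions for each of the $\ell^2$ admissible values of one variable after fixing the other — equivalently, that a fixed nonzero one-variable polynomial of bounded degree has $O(\ell^{1+\varepsilon})$ roots mod $\ell^2$, which fails only when $\ell$ divides its content or its discriminant.

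The main obstacle is the uniformity in $\ell$ of the root count for $f(t,d) \equiv 0 \pmod{\ell^2}$, and in particular isolating and controlling the finitely many bad primes. Because $f$ is squarefree as an element of $\Z[x,y]$ (hence has no repeated factor, so generically its reductions and relevant resultants/discriminants are nonzero), one expects that for $\ell$ outside a finite set $S_f$ depending only on $f$, Hensel-type arguments give at most $O_{\deg f}(\ell)$ solutions mod $\ell^2$ to $f(t,d)\equiv 0$ with the one remaining variable fixed, so $O_{\deg f}(\ell^2)$ pairs total and hence $|C_f(\ell^2)|/|\mathrm{GL}_2(\Z/\ell^2\Z)| \ll_{f} \ell^{-3}\cdot \ell^3/\ell^8 \cdot \ell^2 $-type bookkeeping yields $\ll_f \ell^{-2}$; the precise power of $\ell$ here should be tracked carefully, but the point is it beats $\ell^{-2+\varepsilon}$. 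For $\ell \in S_f$ the trivial bound $|C_f(\ell^2)|/|\mathrm{GL}_2(\Z/\ell^2\Z)| \le 1$ is all we need, and since $|S_f|$ and the entries of $S_f$ are bounded in terms of $f$ alone, their contribution to the product over $\ell \mid n$ is $O_f(1)$, which is absorbed into the $n^\varepsilon$. Squarefreeness of $f$ is exactly what guarantees $S_f$ is finite; if $f$ had a repeated irreducible factor $g$, then $g^2 \mid f$ would force $f(t,d)\equiv 0 \pmod{\ell^2}$ whenever $g(t,d)\equiv 0\pmod\ell$, giving $\asymp \ell^{3}$ pairs and destroying the bound for every $\ell$. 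I would make sure to state this reduction cleanly and then invoke the standard effective Lang–Weil / elementary root-counting estimate for the generic primes.
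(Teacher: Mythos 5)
Your overall strategy --- reduce to a single prime $\ell$ via the Chinese Remainder Theorem, bound the fiber of $(\mathrm{tr},\det)$ on $\mathrm{GL}_2(\Z/\ell^2\Z)$, and count solutions of $f(t,d)\equiv 0 \pmod{\ell^2}$ --- is the paper's strategy in spirit (the paper counts matrices mod $\ell$ and then counts their lifts via an explicit linear form in the lift parameters, whereas you count $(t,d)$ pairs mod $\ell^2$ and multiply by the fiber size; either organization can work). But the quantitative bookkeeping has two problems, one of which is a genuine gap. The minor one: the fiber of $(\mathrm{tr},\det)$ over a point of $\Z/\ell^2\Z\times(\Z/\ell^2\Z)^\times$ has size $\asymp \ell^4$, not $O(\ell^3)$, since $|\mathrm{GL}_2(\Z/\ell^2\Z)|\asymp \ell^8$ while there are only $\asymp\ell^4$ pairs $(t,d)$. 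This slip happens to be harmless: with the correct fiber size $\ell^4$ and the correct pair count $O_f(\ell^2)$ one gets $|C_f(\ell^2)|\ll_f \ell^6$ and hence the needed $\ell^{-2}$.

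The genuine gap is in the count of the pairs themselves. You assert that the number of $(t,d)$ with $f(t,d)\equiv 0\pmod{\ell^2}$ is $O_f(\ell^{2+\varepsilon})$ because a fixed nonzero one-variable polynomial has $O(\ell^{1+\varepsilon})$ roots mod $\ell^2$, applied for each of the $\ell^2$ admissible values of the other variable --- but that computation gives $\ell^2\cdot O(\ell^{1+\varepsilon})=O(\ell^{3+\varepsilon})$ pairs, a full factor of $\ell$ too many. An $O(\ell^3)$ pair count only yields $|C_f(\ell^2)|/|\mathrm{GL}_2(\Z/\ell^2\Z)|\ll \ell^{-1}$, which is not enough: the exponent $2-\varepsilon$ is exactly what makes $\sum_n |C_{f}(n^2)|/|\mathrm{GL}_2(\Z/n^2\Z)|$ converge in Lemma \ref{inclusion-exclusion} and in the tail estimate in the proof of Theorem \ref{upperbound}. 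To recover $O_f(\ell^2)$ you must use the two-dimensional structure rather than slicing one variable at a time: for $\ell$ outside a finite set, $f\bmod\ell$ is a nonzero squarefree polynomial cutting out a curve with $O_f(\ell)$ points in $\F_\ell^2$, of which only $O_f(1)$ are singular (this is where the coprimality of $f$ with its partial derivatives, i.e.\ squarefreeness, enters, exactly as in the paper's resultant argument); by the expansion $f(t_0+\ell u,d_0+\ell v)\equiv f(t_0,d_0)+\ell\left(u\,\partial f/\partial x+v\,\partial f/\partial y\right) \pmod{\ell^2}$, each nonsingular point contributes exactly $\ell$ of its $\ell^2$ lifts, and each singular point at most $\ell^2$, giving $O_f(\ell)\cdot \ell+O_f(1)\cdot\ell^2=O_f(\ell^2)$. (Equivalently, your slicing can be repaired by noting that for all but $O(\ell)$ of the $\ell^2$ values of $d$, the polynomial $f(\cdot,d)\bmod\ell$ has only simple roots and hence only $O_f(1)$ roots mod $\ell^2$ by Hensel.) As written, the step producing $O_f(\ell^{2+\varepsilon})$ does not follow from what you invoke.
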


\begin{proof}

We begin by showing that for any prime $p$, we have
\begin{eqnarray}
\label{numbermat}
|C_{f}(p^2)| = \# \left\{ g \in \gl(\Z/p^2 \Z) \;:\; f(\tr{g}, \det{g}) \equiv 0 \mod p^2 \right\} \ll_f
{p^{6}}. \end{eqnarray}
Let \[ \left( \begin{array}{cc}
a & b  \\
c & d \end{array} \right) \in \gl(\Z / p\Z).\] For each pair $(D,T)$ with $D \in \F_p^*$ and $T \in \F_p$, we first count the matrices in $\gl( \Z / p \Z)$ with determinant $ad - bc = D$ and trace $a + d = T$. We consider the following two cases:

\noindent \textbf{Case 1}: $ad - D \not\equiv 0 \pmod{p}$.

We observe that $ad - D = (T - d)d - D \equiv 0 \pmod{p}$ if and only if $d^2 - Td + D \equiv 0 \pmod{p}$. This criterion is satisfied for $N := 1 + \left(\frac{T^2 - 4D}{p}\right)$ values of $d$, where $\left(\frac{\cdot}{p}\right)$ is the Legendre symbol. Thus, the number of values of $d$ in $\gl(\Z/p\Z)$ for which $ad - D \not\equiv 0 \pmod{p}$ is $p - N.$ The choice of $a$ is completely determined by the choice of $d$. Moreover, the number of choices for the pair $(b,c)$ is $p-1$, since we must exclude the pair that would yield $ad - D \equiv 0 \pmod{p}$. As a result, we have $(p-N)(p-1)$ matrices with the prescribed properties.

\noindent \textbf{Case 2}: $ad - D \equiv 0 \pmod{p}.$

From the previous case, we see that the number of choices for $d$ is $N$ and the number of choices for $a$ is $1$. In this case, we have $2p - 1$ choices for $b$ and $c$. This gives us $(2p - 1)N$ matrices with $ad - D \equiv 0 \pmod{p}.$

By summing the counts obtained in the two cases described above, we see that the full count of matrices in $\gl(\Z/p\Z)$ with determinant $D$ and trace $T$ is $$(p-N)(p-1) + (2p-1)N = p^2 + p(N-1) = p^2 + O(p).$$

Therefore, letting $S_{f,D}(p)$ be the set of roots of the polynomial $f(x, D)$ over $\F_p$ for any
$D \in \F_p^*$, we have that
\begin{eqnarray*}
|C_f(p)| &=& \sum_{D \in \F_p^*} \# \left\{ g \in \gl( \Z / p \Z) \; : \; f(\tr{g}, D) =0 \right\} \\
&\leq& \sum_{D \in \F_p^*, \ T \in S_{f,D}(p)} \# \left\{ g \in \gl( \Z / p \Z) \; : \; \tr{g}=T,
\ \det{g} = D \right\}  \\
&\ll& \sum_{D \in \F_p^*} |S_{f,D}(p)| \, p^2 \leq (\deg_{x}{f}) \cdot p^3 \ll_f p^3.
\end{eqnarray*}


Then, in order to bound ${|C_{f}(p^2)|}$, we want to count
of lifts $\tilde{g} \in \gl(\Z/p^2 \Z)$ of a given matrix $g \in C_f(p)$ which satisfy
\begin{eqnarray} \label{newcong}
f(\tr{\tilde{g}}, \det{\tilde{g}}) \equiv 0 \mod p^2.\end{eqnarray}
We write
$$
\tilde{g} = \left( \begin{array}{ll} a + k_1 p & b + k_2 p \\ c + k_3 p & d + k_4 p \end{array} \right), \quad
1 \leq k_i \leq p, \quad i=1,2,3,4,
$$
and  $T= \tr{g}, D = \det{g}, \tr{\tilde{g}} = T + p u,
\det{\tilde{g}} = D + p v$. Using the Taylor expansion of $f$, we have that
$$f(T+pu, D+pv) \equiv  f(T,D) + p \left( u \frac{\partial f}{\partial x}(T,D) +
v \frac{\partial f}{\partial y}(T,D) \right) \mod p^2.$$
Let
\begin{eqnarray*}
h(k_1, k_2, k_3, k_4) &=& \left( u \frac{\partial f}{\partial x}(T,D) +
v \frac{\partial f}{\partial y}(T,D) \right) \\ &=&
\left( d \frac{\partial f}{\partial y}(T,D) + \frac{\partial f}{\partial x}(T,D) \right)k_1 +
\left( a \frac{\partial f}{\partial y}(T,D) + \frac{\partial f}{\partial x}(T,D) \right) k_4 \\
&& \quad - b \frac{\partial f}{\partial y}(T,D) k_3 - c \frac{\partial f}{\partial y}(T,D) k_2.
\end{eqnarray*}
Then,
we need to count the number of solutions to the congruence
\begin{eqnarray}\label{hcong}
h(k_1, k_2, k_3, k_4) \equiv - \frac{f(T,D)}{p} \mod p.
\end{eqnarray}
(Recall that $p$ divides $f(T,D)$ by hypothesis, since we are lifting elements of $C_f(p)$).

If $h(k_1, k_2, k_3, k_4) \neq 0$, the number of solutions $(k_1, k_2, k_3, k_4)$ to the congruence given in \eqref{hcong} is bounded by $\ll_f p^3$.
If  $h(k_1, k_2, k_3, k_4) = 0$, then we can have $p^4$ solutions $(k_1, k_2, k_3, k_4)$ if $f(T,D) \equiv 0
\mod p^2$. Notice that, unless $b=c = 0$, we have that $h(k_1, k_2, k_3, k_4) \neq 0$,
except in the case where $$\frac{\partial f}{\partial x}(T,D) = \frac{\partial f}{\partial y}(T,D) \equiv  0 \mod p.$$
So, we only need to consider the pairs $(T,D)$ such that
\begin{eqnarray} \label{number-sol}
f(T,D) = \frac{\partial f}{\partial x}(T,D) = \frac{\partial f}{\partial y}(T,D) \equiv  0 \mod p.
\end{eqnarray}
We claim there is a bounded number of such pairs $(T,D)$ when $f(x,y)$ is squarefree. Indeed, in that
case $f(x,y)$ and $\displaystyle \frac{\partial f}{\partial x}$ are co-prime, and it follows from
the polynomial analogue of Bezout's identity (Max Noether's fundamental theorem \cite[p.702]{GH})
that
one can find polynomials $a(x,y), b(x,y) \in \Z[x,y]$ and  $\Delta_1(x) \in \Z[x]$ such that
$$
a(x,y) f(x,y) + b(x,y) \frac{\partial f}{\partial x}(x,y) = \Delta_1(x).
$$
Similarly, one can find polynomials
$a(x,y), b(x,y) \in \Z[x,y]$ and $\Delta_2(y) \in \Z[y]$ such that
$$
a(x,y) f(x,y) + b(x,y) \frac{\partial f}{\partial y}(x,y) = \Delta_2(y).
$$
Then, the number of $(T,D) \in \F_p^2$ satisfying (\ref{number-sol}) is bounded by
$\deg \Delta_1 \times \deg \Delta_2$, independently of $p$.

Thus, we see that each matrix in $C_f(p)$ lifts to either $\ll_f p^3$ matrices or $\ll_f p^4$ matrices (in the case where $h(k_1, k_2, k_3, k_4) = 0)$. So, for each prime $p$, we  have
$$
|C_f(p^2)| \ll_f p^6,$$ which proves (\ref{numbermat}). It follows immediately
that
$$
\frac{|C_f(p^2)|}{|\gl(\Z / p^2 \Z)|} \ll_f \frac{1}{p^2}.
$$

Finally, by applying the Chinese Remainder Theorem over all prime divisors of the
squarefree integer $n$, we have
that
$$
\frac{|C_f(n^2)|}{|\gl(\Z / n^2 \Z)|} = \prod_{p \mid n} \frac{|C_f(p^2)|}{|\gl(\Z / p^2 \Z)|}
\ll_f \prod_{p \mid n} \frac{1}{p^2} \ll \frac{1}{n^{2-\varepsilon}},
$$
which concludes the proof of the lemma.

\end{proof}






\section{Proof of Theorem \ref{upperbound}}\label{ub}

Our proof of Theorem \ref{upperbound} will rely on the following lemma:

\begin{lemma}\label{inclusion-exclusion} Let $C_{E,f}^{SF}$ be the conjectural constant defined by \eqref{defCE}. Then
$$C_{E,f}^{SF} = \sum_{d=1}^\infty \mu(d)\frac{|C_{E,f}(d^2)|}{|G_E(d^2)|}.$$
\end{lemma}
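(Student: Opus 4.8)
The plan is to recognize the series $\sum_{d=1}^\infty \mu(d)\,|C_{E,f}(d^2)|/|G_E(d^2)|$ as an Euler-product-type expansion, by splitting each squarefree $d$ into its $M_E$-part and the part coprime to $M_E$, and then matching the two resulting factors against the two factors in \eqref{defCE}. The engine is a multiplicativity property of $d \mapsto |C_{E,f}(d^2)|/|G_E(d^2)|$ along such a splitting, together with the absolute convergence supplied by Lemma \ref{keylemma}.

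First I would record the multiplicativity. Fix a squarefree $d$ and write $d = nm$, where $n = \prod_{\ell \mid d,\ \ell \mid M_E}\ell$ is a squarefree divisor of $M_E$ and $m = d/n$ is squarefree with $\gcd(m,M_E) = 1$; then $\gcd(n,m) = \gcd(n^2,m^2) = 1$. By the Chinese Remainder Theorem, $\mathrm{GL}_2(\Z/d^2\Z) \cong \mathrm{GL}_2(\Z/n^2\Z)\times \mathrm{GL}_2(\Z/m^2\Z)$, and under this isomorphism a matrix $g$ corresponds to a pair $(g_1,g_2)$ with $\tr g$ and $\det g$ reducing to $(\tr g_1,\tr g_2)$ and $(\det g_1,\det g_2)$. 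Since $f \in \Z[x,y]$, the congruence $f(\tr g,\det g) \equiv 0 \bmod d^2$ defining $C_{E,f}(d^2)$ in \eqref{def-cef} holds if and only if $f(\tr g_1,\det g_1)\equiv 0 \bmod n^2$ and $f(\tr g_2,\det g_2)\equiv 0 \bmod m^2$. Property \eqref{serre2} (applied to the coprime moduli $n^2$ and $m^2$, using $\gcd(m^2,M_E)=1$) gives $G_E(d^2)\cong G_E(n^2)\times G_E(m^2)$, and property \eqref{serre1} gives $G_E(m^2) = \mathrm{GL}_2(\Z/m^2\Z)$, hence $C_{E,f}(m^2) = C_f(m^2)$. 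Combining these,
$$\mu(d)\,\frac{|C_{E,f}(d^2)|}{|G_E(d^2)|} = \left(\mu(n)\,\frac{|C_{E,f}(n^2)|}{|G_E(n^2)|}\right)\left(\mu(m)\,\frac{|C_f(m^2)|}{|\mathrm{GL}_2(\Z/m^2\Z)|}\right).$$

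Next I would check absolute convergence so that the series may be rearranged as a product of series. Since $C_{E,f}(n^2)\subseteq G_E(n^2)$, the first factor has absolute value at most $1$; by Lemma \ref{keylemma} the second is $\ll_f m^{-2+\varepsilon}$, and since $n \leq M_E$ forces $m \geq d/M_E$, the general term is $\ll_{E,f} d^{-2+\varepsilon}$. As every squarefree $d$ factors uniquely in the form $d = nm$ above, the rearrangement gives
$$\sum_{d=1}^\infty \mu(d)\,\frac{|C_{E,f}(d^2)|}{|G_E(d^2)|} = \left(\sum_{n \mid M_E}\mu(n)\,\frac{|C_{E,f}(n^2)|}{|G_E(n^2)|}\right)\left(\sum_{\substack{m\ \mathrm{squarefree}\\ \gcd(m,M_E)=1}}\mu(m)\,\frac{|C_f(m^2)|}{|\mathrm{GL}_2(\Z/m^2\Z)|}\right),$$
where the sum over $n$ runs over all divisors of $M_E$ because $\mu$ kills the non-squarefree ones. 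Finally, the multiplicativity of $m\mapsto |C_f(m^2)|/|\mathrm{GL}_2(\Z/m^2\Z)|$ over squarefree $m$ (established inside the proof of Lemma \ref{keylemma} via the Chinese Remainder Theorem) converts the second sum into the Euler product $\prod_{\ell\nmid M_E}\bigl(1 - |C_f(\ell^2)|/|\mathrm{GL}_2(\Z/\ell^2\Z)|\bigr)$, and comparison with \eqref{defCE} yields the claim.

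The only genuinely delicate point is the multiplicativity step: one must check carefully that the congruence defining $C_{E,f}$ splits correctly under the Chinese Remainder isomorphism and that properties \eqref{serre1} and \eqref{serre2} apply to the squared moduli $n^2$ and $m^2$ (they do, since coprimality is preserved under squaring and $\gcd(m,M_E)=1 \Rightarrow \gcd(m^2,M_E)=1$). Everything else is a routine Euler-product manipulation, with Lemma \ref{keylemma} providing the absolute convergence that legitimizes the rearrangement.
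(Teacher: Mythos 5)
Your proof is correct and follows essentially the same route as the paper's: both decompose each squarefree $d$ as a product of a divisor of $M_E$ and a part coprime to $M_E$, factor the terms using \eqref{serre1}, \eqref{serre2} and the Chinese Remainder Theorem, and convert the coprime-to-$M_E$ sum into the Euler product. You supply more detail than the paper does (in particular the absolute-convergence check via Lemma \ref{keylemma} that justifies the rearrangement), but the underlying argument is the same.
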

\begin{proof}
By the properties \eqref{serre1} and \eqref{serre2} of $M_E$ and the Chinese Remainder Theorem, we can write
\begin{eqnarray*}
\sum_{d=1}^\infty \mu(d)\frac{|C_{E,f}(d^2)|}{|G_E(d^2)|} &=&
\sum_{k \mid M_E} \sum_{\substack{ d=1 \\(d, M_E)=k}}^\infty \mu(d)\frac{|C_{E,f}(d^2)|}{|G_E(d^2)|} \\
&=& \sum_{k \mid M_E}  \mu(k)\frac{|C_{E,f}(k^2)|}{|G_E(k^2)|}
\sum_{\substack{j=1 \\(j, M_E)=1}}^\infty \mu(j)\frac{|C_{E,f}(j^2)|}{|G_E(j^2)|} \\
&=& \sum_{k \mid M_E}  \mu(k)\frac{|C_{E,f}(k^2)|}{|G_E(k^2)|}
\prod_{\ell\nmid M_E} \left(1-\frac{|C_{f}(\ell^2)|}{|\mbox{GL}_2(\Z/\ell^2\Z)|}\right) = C_{E,f}^{SF} . \end{eqnarray*}
\end{proof}

Now we commence with our proof of Theorem \ref{upperbound}. For every real number $z \geq 2$, we have
	$$\pi_{E,f}^{\rm SF}(X) \leq \# \left\{p\leq X\;|\; \ell^2\nmid f_p(E), \; \forall \ell\le z\right\}.$$ Let $P(z) := \prod_{\ell\leq z} \ell,$ and define
	$$\Omega_E(P(z)^2) := \left\{g\in G_E(P(z)^2)\;|\; \ell^2\nmid f(\tr{g}, \det{g}), \; \forall \ell\leq z\right\}.$$
	
	 Moreover, let $n=P(z)^2$ and $K= \Q(E[n]).$ As described at the end of Section \ref{section-CDT}, we can use Corollary \ref{usefulcdt} to obtain
\begin{eqnarray*}
 \# \left\{p\leq X\;|\; \ell^2\nmid f_p(E), \; \forall \ell\le z\right\}
	&=&  \# \left\{p\leq X\;|\; \sigma_p \in \Omega_E(P(z)^2) \right\} \\
	&=& \pi(X)\cdot \left|\frac{\Omega_E(P(z)^2)}{G_E(P(z)^2)}\right|+ O\left(X\exp\left(-\frac{A}{P(z)^4} \sqrt{\log X} \right)\right),
	\end{eqnarray*}
for $X$ sufficiently large (where $A$ is an absolute constant). Taking $\log X\gg P(z)^{24} (\log P(z))^2$ yields
	$$P(z) \ll_E \log^{\frac{1}{24}-\varepsilon} X,$$  for any $\varepsilon > 0$.
Then our error term is
	$$O\left(X\exp \left(-\frac{A}{P(z)^4}\sqrt{\log X}\right)\right) = O_E \left(X\exp\left(-A
(\log{X})^{1/3+\varepsilon}\right)\right).$$

Now, using Lemma \ref{inclusion-exclusion}, we obtain
	\begin{align*}
	\frac{|\Omega_E(P(z)^2)|}{|G_E(P(z)^2)|} & = \sum_{n\mid P(z)} \mu(n)\frac{|C_{E,f}(n^2)|}{|G_E(n^2)|} \\
	&=C_{E,f}^{\rm SF} + O\left(\sum_{n\geq z}\frac{|C_{E,f}(n^2)|}{|G_E(n^2)|}\right).
\end{align*}

Proceeding as in the proof of Lemma \ref{inclusion-exclusion}, we have that
\begin{eqnarray*}
\sum_{n\geq z}\frac{|C_{E,f}(n^2)|}{|G_E(n^2)|} &\leq& \sum_{k \mid M_E} \frac{|C_{E,f}(k^2)|}{|G_E(k^2)|}
\sum_{j\geq z/k}\frac{|C_{f}(j^2)|}{|\mbox{GL}_2(\Z/j^2\Z)|} \\
&\ll_E& \sum_{j\geq z/M_E}\frac{|C_{f}(j^2)|}{|\mbox{GL}_2(\Z/j^2\Z)|} \\
&\ll_{E, f} & \sum_{j \geq z/M_E} \frac{1}{j^{2-\varepsilon}} \ll_{E, f} \frac{1}{z^{1-\varepsilon}},
\end{eqnarray*}
where the penultimate inequality follows from Lemma \ref{upperbounds-n1}.

Therefore, we have
	$$\pi_{E,f}^{\rm SF}(X) \leq C_{E, f}^{\rm SF}\cdot\pi(X)+O_{E,f} \left(\frac{\pi(X)} {z^{1-\varepsilon}} + X\exp\left(-(\log X)^{{1}/{3}+\varepsilon}\right)\right).$$ To optimize, we want to choose the largest possible value of $z$ such that $P(z) \ll \log^{\frac{1}{24} - \varepsilon} X$. We take $z = c \log \log X$ for $c>0$ small enough, which yields
	$$\pi_{E,f}^{\rm SF}(X) \leq C_{E, f}^{SF}\cdot \pi(X) \left(1+O_{E,f} \left(\frac{1}{(\log \log X)^{1 -\varepsilon}}\right)\right).$$
This completes the proof of Theorem \ref{upperbound}.




\section{Averaging the constants over families of elliptic curves}\label{avgconstants}



In this section, we prove Theorem \ref{thmaveragec} by
separating the family of curves $E \in \mathcal{C}$ into two subsets: Serre curves and non-Serre curves. We handle the average over non-Serre curves in Section \ref{nonserre}, and we compute the average over
Serre curves in Section \ref{serre}.

\subsection{Averaging over non-Serre curves}\label{nonserre}

\begin{proposition} \label{lemma-nonserre} There exists an absolute constant $\delta > 0$ such that
$$\frac{1}{| \mathcal{C}(A,B)|} \sum_{E \in \mathcal{N}(A,B)} C_{E,f}^{SF} \ll \frac{\log^{\delta}(AB)}{\sqrt{\min{(A,B)}}}.$$
\end{proposition}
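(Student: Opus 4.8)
The plan is to bound each summand $C_{E,f}^{SF}$ by an absolute constant and then invoke Jones's density bound for non-Serre curves (Theorem \ref{thm-jones}). First I would observe, using Lemma \ref{inclusion-exclusion}, that
$$
C_{E,f}^{SF} = \sum_{d=1}^\infty \mu(d)\frac{|C_{E,f}(d^2)|}{|G_E(d^2)|}
= \prod_{\ell \nmid M_E}\left(1 - \frac{|C_f(\ell^2)|}{|\mathrm{GL}_2(\Z/\ell^2\Z)|}\right)\sum_{n \mid M_E} \mu(n)\frac{|C_{E,f}(n^2)|}{|G_E(n^2)|}.
$$
Each factor $1 - |C_f(\ell^2)|/|\mathrm{GL}_2(\Z/\ell^2\Z)|$ lies in $[0,1]$, so the infinite product is bounded above by $1$; it is in fact bounded away from $0$ as well by Lemma \ref{keylemma}, but for this proposition only the upper bound matters. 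For the finite sum over $n \mid M_E$, I would bound it trivially in absolute value by $\sum_{n \mid M_E} |C_{E,f}(n^2)|/|G_E(n^2)| \le d(M_E) \le 2^{\omega(M_E)}$, since each ratio $|C_{E,f}(n^2)|/|G_E(n^2)| \le 1$. This gives $C_{E,f}^{SF} \ll 2^{\omega(M_E)}$, which is not quite uniform enough on its own.

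The key step is therefore to control $M_E$, or at least $2^{\omega(M_E)}$, on average (or uniformly) over $E \in \mathcal{N}(A,B)$. Here I would appeal to Zywina's effective form of Serre's theorem, Theorem \ref{explicitform-zywina}: the index $[\mathrm{GL}_2(\hat\Z):\rho_E(G_\Q)]$ is $\ll \max(1,h(j_E))^\gamma$, and since $M_E$ divides a bounded power of this index (the level $M_E$ is controlled by the index), we get $M_E \ll \max(1, h(j_E))^{\gamma'}$ for an absolute $\gamma'$. For $E(a,b) \in \mathcal{C}(A,B)$ the $j$-invariant is $j = 1728\cdot 4a^3/(4a^3+27b^2)$, so its height satisfies $h(j_E) \ll \log(AB)$; hence $M_E \ll \log^{\gamma'}(AB)$, whence $2^{\omega(M_E)} \ll M_E^\varepsilon \ll \log^{\varepsilon\gamma'}(AB)$ uniformly over the whole box. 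Thus $C_{E,f}^{SF} \ll \log^{\delta_0}(AB)$ for an absolute $\delta_0$, uniformly in $E \in \mathcal{C}(A,B)$.

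Combining these two ingredients, I would write
$$
\frac{1}{|\mathcal{C}(A,B)|}\sum_{E \in \mathcal{N}(A,B)} C_{E,f}^{SF} \ll \log^{\delta_0}(AB)\cdot \frac{|\mathcal{N}(A,B)|}{|\mathcal{C}(A,B)|} \ll \log^{\delta_0}(AB)\cdot\frac{\log^\beta(\min(A,B))}{\sqrt{\min(A,B)}} \ll \frac{\log^{\delta}(AB)}{\sqrt{\min(A,B)}},
$$
using Theorem \ref{thm-jones} in the middle step and absorbing the powers of logarithm into a single absolute exponent $\delta = \delta_0 + \beta$ (using $\log\min(A,B) \le \log(AB)$). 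The main obstacle is the second paragraph: making precise the claim that $M_E$ is bounded by a power of the Zywina index, and hence by a power of $\log(AB)$ over the box — one must be a little careful that $M_E$ (the smallest level at which $\rho_E$ becomes a full preimage) is genuinely controlled by the index and not merely divisible by the primes dividing it, but this is standard and follows, e.g., from the discussion in \cite{jonesthesis} relating $M_E$ to the index. Everything else is a trivial estimate.
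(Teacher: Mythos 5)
Your overall architecture (bound $C_{E,f}^{SF}$ pointwise by a power of $\log(AB)$ via Zywina, then apply Jones's Theorem \ref{thm-jones}) is the right one and matches the paper, but the second paragraph contains a genuine error: $M_E$ is \emph{not} controlled by the index $[\mathrm{GL}_2(\hat{\Z}):\rho_E(G_\Q)]$. The standard counterexample is a Serre curve, where the index is $2$ but $M_E = 2|\Delta_{SF}|$ or $4|\Delta_{SF}|$ is unbounded; the same entanglement $\Q(\sqrt{\Delta_E}) \subseteq \Q(E[2])$ forces $|\Delta_{SF}|$ to divide $M_E$ for general curves, and $|\Delta_{SF}|$ can be of size $\max(A^3,B^2)$, not $\log^{O(1)}(AB)$. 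Consequently your fallback bound $C_{E,f}^{SF} \ll 2^{\omega(M_E)}$ does not rescue the argument either: $\omega(M_E)$ can be as large as $c\log(AB)/\log\log(AB)$, so $2^{\omega(M_E)}$ can exceed every fixed power of $\log(AB)$, and the product $2^{\omega(M_E)}\cdot \log^{\beta}(\min(A,B))/\sqrt{\min(A,B)}$ does not yield the stated bound (take $B$ much larger than $A$).

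The repair is to stop counting the \emph{number} of terms in the sum over $n \mid M_E$ and instead make each \emph{term} small. Starting from Lemma \ref{inclusion-exclusion}, one has
$$
C_{E,f}^{SF} \leq \sum_{d=1}^{\infty} \frac{|C_{f}(d^2)|}{|G_E(d^2)|}
\leq [\mathrm{GL}_2(\hat{\Z}):\rho_E(G_\Q)] \sum_{d=1}^{\infty} \frac{|C_{f}(d^2)|}{|\mathrm{GL}_2(\Z/d^2\Z)|},
$$
using $C_{E,f}(d^2) \subseteq C_f(d^2)$ and the fact that the index of $G_E(d^2)$ in $\mathrm{GL}_2(\Z/d^2\Z)$ is at most the adelic index for every $d$. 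Lemma \ref{keylemma} gives $|C_f(d^2)|/|\mathrm{GL}_2(\Z/d^2\Z)| \ll_f d^{-2+\varepsilon}$ for squarefree $d$ (the only $d$ that occur), so the series converges and $C_{E,f}^{SF} \ll_f [\mathrm{GL}_2(\hat{\Z}):\rho_E(G_\Q)]$, with no reference to the size of $M_E$ at all. Now Zywina's Theorem \ref{explicitform-zywina} and the height estimate $h(j_{E(a,b)}) \ll \log(AB)$ give $C_{E,f}^{SF} \ll (\log AB)^{\gamma}$ uniformly over the box, and your final display with Theorem \ref{thm-jones} goes through with $\delta = \beta + \gamma$. (A minor additional quibble: the inequality $d(M_E) \le 2^{\omega(M_E)}$ goes the wrong way in general; what you mean is that only the $2^{\omega(M_E)}$ squarefree divisors contribute. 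That slip is harmless, unlike the one above.)
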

\begin{proof}
For any $E \in \mathcal{C}(A,B)$,
we have that
\begin{eqnarray*}
C_{E,f}^{SF} &=& \sum_{d=1}^\infty \mu(d) \frac{|C_{E,f}(d^2)|}{|G_E(d^2)|} \\ &\leq&
\sum_{d=1}^\infty \frac{|C_{f}(d^2)|}{|G_E(d^2)|} \\
&\leq& [\mathrm{GL}_2(\hat{\Z}) : \rho_E(G_\Q)]  \sum_{d=1}^\infty \frac{|C_{f}(d^2)|}{|\gl(\Z/d^2\Z)|}\\
&\ll&  [\mathrm{GL}_2(\hat{\Z}) : \rho_E(G_\Q)]
\end{eqnarray*}
where the final inequality follows from Lemma \ref{keylemma}.

Using Theorem \ref{explicitform-zywina}, we have that for any $E(a,b) \in \mathcal{C}(a,b)$,
$$
C_{E,f}^{SF} \ll  [\mathrm{GL}_2(\hat{\Z}) : \rho_E(G_\Q)] \ll
( \max{(1, h(j_{E(a,b)}))} )^{\gamma}
$$
where $\gamma$ is an absolute constant. Since $|a| \leq A$ and $|b| \leq B$, we have that
\begin{eqnarray*}
h(j_{E(a,b)}) &=& h \left( \left[ 1728(4a)^3, - 16 (4a^3 + 27b^2) \right] \right) \\&\ll&
\log{(\max{(A, B)} )} \leq \log{AB},
\end{eqnarray*}
and then $C_{E(a,b), f}^{SF} \ll (\log{AB})^\gamma.$ Now, using Theorem  \ref{thm-jones} to bound
the size of $\mathcal{N}(A,B)$, we get immediately that
$$
\frac{1}{|\mathcal{C}(A,B)|} \sum_{E \in \mathcal{N}(A,B)} C_{E,f}^{SF} \ll \frac{\log^{\beta+\gamma}(AB)}{\sqrt{\min{(A,B)}}},
$$
and Proposition \ref{lemma-nonserre} follows by taking $\delta = \beta + \gamma$.

\end{proof}

\subsection{Averaging over Serre curves}\label{serre}



In this section, our goal is to show the following.

\begin{proposition} \label{average-serre-curves}
Let $\mathcal{C}(A , B)$ be the set of elliptic curves given by equations $y^2 = x^3 + ax + b$, with $ 4a^3 + 27b^2 \neq 0$ and $|a|\leq  A$ and $|b| \leq B$.  Let $\mathcal{S}(A, B) \subseteq \mathcal{C}(A,B)$ be the subset of Serre curves. Let $f \in \Z[x,y]$ be a non-constant squarefree polynomial.

Then, we have
$$
\frac{1}{|\mathcal{C}(A , B)|} \sum_{E \in \mathcal{S}(A , B)} \left|C_{E,f}^{ SF}  - C_{f}^{SF}\right| \ll  \frac{1}{A} + \left(\frac{\log B (\log A)^7}{B}\right).
$$
Consequently,
$$\frac{1}{|\mathcal{C}(A,B)|} \sum_{E \in \mathcal{S}(A,B)} C_{E,f}^{SF} \sim
C_{f}^{SF}$$
as $A,B \rightarrow \infty$.
\end{proposition}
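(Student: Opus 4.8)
The plan is to compare $C_{E,f}^{SF}$ with $C_f^{SF}$ directly, using the concrete description of $G_E(n)$ for a Serre curve. For a Serre curve $E$, the integer $M_E$ equals the squarefree part of the discriminant (up to the factor of $2$), or more precisely $M_E \mid 2\operatorname{rad}(\Delta_E)$; in particular $\rho_E(G_\Q) = H_E$ is an index-two subgroup of $\gl(\hat\Z)$ cut out by a single quadratic character attached to $\Q(\sqrt{\Delta_E})$. Because of the product formula \eqref{defCE} and Lemma \ref{inclusion-exclusion}, both $C_{E,f}^{SF}$ and $C_f^{SF}$ are (conditionally convergent) Euler-type products; the primes $\ell \nmid M_E$ contribute identical local factors $1 - |C_f(\ell^2)|/|\gl(\Z/\ell^2\Z)|$, so the entire discrepancy is localized at primes dividing $M_E$. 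Thus I will first show that
$$
\left| C_{E,f}^{SF} - C_f^{SF} \right| \ll_f \sum_{\ell \mid M_E} \frac{1}{\ell^{2-\varepsilon}} \ll_f \frac{1}{(\text{smallest prime factor of }M_E)^{2-\varepsilon}},
$$
by writing $C_{E,f}^{SF} = C_f^{SF} \cdot \prod_{\ell \mid M_E}(\text{ratio of local corrections})$ and estimating each local ratio as $1 + O_f(\ell^{-2+\varepsilon})$ using Lemma \ref{keylemma} together with the fact that $G_E$ at $\ell \mid M_E$ is an index-$\le 2$ subgroup of $\gl$, so $|C_{E,f}(\ell^2)|/|G_E(\ell^2)|$ differs from $|C_f(\ell^2)|/|\gl(\Z/\ell^2\Z)|$ by at most a bounded multiplicative factor, still $O_f(\ell^{-2+\varepsilon})$.

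Next I will sum this bound over $E \in \mathcal{S}(A,B)$. The key input is that $M_E$ is essentially determined by $\operatorname{rad}(\Delta_E)$, and for the curve $E(a,b)$ we have $\Delta_E = -16(4a^3 + 27b^2)$. So I need the average, over the box $|a| \le A$, $|b| \le B$, of the quantity $q(E)^{-2+\varepsilon}$ where $q(E)$ is the least prime dividing $M_E$; equivalently, I need to count pairs $(a,b)$ for which a fixed prime $\ell$ divides $4a^3 + 27b^2$. For fixed $\ell$, the congruence $4a^3 + 27b^2 \equiv 0 \pmod{\ell}$ cuts out $O(\ell)$ residue classes mod $\ell$ in $b$ for each $a$ (it's quadratic in $b$), so it has $\ll \ell \cdot \ell = \ell^2$... more carefully $\ll \ell$ solutions mod $\ell^2$ scaled appropriately — the density of such $(a,b)$ in the box is $\ll 1/\ell$. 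Hence
$$
\sum_{E \in \mathcal{C}(A,B)} \frac{1}{q(E)^{2-\varepsilon}} \ll \sum_{\ell} \frac{1}{\ell^{2-\varepsilon}} \cdot \frac{|\mathcal{C}(A,B)|}{\ell} + (\text{contribution where }M_E\text{ has no small prime factor}),
$$
and the tail where $M_E$ is only divisible by large primes is controlled by summing $1/\ell^{2-\varepsilon}$ over $\ell > $ (some threshold), which is negligible. Dividing by $|\mathcal{C}(A,B)| \asymp AB$ gives a bound of the shape $1/A + (\log B)(\log A)^7/B$ after carefully handling the edge cases (small $a$ or $b$, where the box is thin in one direction, producing the $1/A$ term) — the powers of logs come from tracking the number of prime divisors of $\Delta_E$ and from Theorem \ref{explicitform-zywina}-type bounds on $M_E$ in terms of $h(j_E) \ll \log AB$.

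The main obstacle, I expect, is the bookkeeping in the final averaging step: one must bound $M_E$ (not just $\operatorname{rad}(\Delta_E)$) in terms of the curve's parameters, control how often $4a^3+27b^2$ is divisible by $\ell^2$ versus just $\ell$, and handle the asymmetry between the $a$- and $b$-ranges — all while keeping the error in the stated form $1/A + (\log B)(\log A)^7/B$. The worst case is when $\min(A,B)$ is small, and the $1/A$ term reflects the curves where $a$ is tiny so that $\Delta_E = -16(4a^3+27b^2)$ is essentially $b$-dependent with few arithmetic constraints. The second statement of the proposition, namely $\frac{1}{|\mathcal{C}(A,B)|}\sum_{E \in \mathcal{S}(A,B)} C_{E,f}^{SF} \sim C_f^{SF}$, then follows immediately: since $|\mathcal{S}(A,B)|/|\mathcal{C}(A,B)| \to 1$ by Theorem \ref{thm-jones} and each $|C_{E,f}^{SF} - C_f^{SF}|$ averages to $o(1)$ by the first bound, the triangle inequality gives
$$
\left| \frac{1}{|\mathcal{C}(A,B)|}\sum_{E \in \mathcal{S}(A,B)} C_{E,f}^{SF} - C_f^{SF} \right| \le \frac{1}{|\mathcal{C}(A,B)|}\sum_{E \in \mathcal{S}(A,B)} \left| C_{E,f}^{SF} - C_f^{SF} \right| + C_f^{SF}\left(1 - \frac{|\mathcal{S}(A,B)|}{|\mathcal{C}(A,B)|}\right) \to 0.
$$
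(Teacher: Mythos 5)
There is a genuine gap, and it sits at the very first step: the claimed factorization $C_{E,f}^{SF} = C_f^{SF}\cdot\prod_{\ell \mid M_E}(\text{local correction})$ with each local factor $1+O_f(\ell^{-2+\varepsilon})$ is false. For a Serre curve the obstruction to surjectivity of $\rho_E$ is a \emph{global} entanglement --- the quadratic character cutting out $H_E$ involves all primes dividing $M_E$ simultaneously, via the containment of $\Q(\sqrt{\Delta_E})$ in both $\Q(E[2])$ and a cyclotomic field of conductor dividing $M_E$ --- so it does not decompose prime by prime. Concretely, $G_E(\ell^2)=\gl(\Z/\ell^2\Z)$ for every prime $\ell\mid M_E$ with $\ell\neq M_E$: this is the content of Lemma \ref{whendifferent}, which shows that among squarefree $n\mid M_E$ the group $G_E(n^2)$ can differ from $\gl(\Z/n^2\Z)$ only for $n\in\{M_E,M_E/2,M_E/4\}$. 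The deviation of $C_{E,f}^{SF}$ from $C_f^{SF}$ therefore comes from $O(1)$ terms of the inclusion--exclusion sum, each of size $\ll_f M_E^{-2+\varepsilon}$ by Lemma \ref{boundratio}, so the correct per-curve bound is $|C_{E,f}^{SF}-C_f^{SF}|\ll_f M_E^{-2+\varepsilon}\asymp|\Delta_{SF}(E)|^{-2+\varepsilon}$, far stronger than your $\sum_{\ell\mid M_E}\ell^{-2+\varepsilon}$.

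This is not a cosmetic difference: your bound is too weak for the averaging step to succeed. Since $2\mid M_E$ for every Serre curve by \eqref{E12Jones}, the least prime factor $q(E)$ of $M_E$ is always $2$, so $q(E)^{-2+\varepsilon}$ is a positive constant; and even the refined version $\sum_{\ell\mid M_E}\ell^{-2+\varepsilon}$ averages, by your own density count (the proportion of $(a,b)$ with $\ell\mid 4a^3+27b^2$ is $\asymp 1/\ell$), to $\sum_\ell \ell^{-3+\varepsilon}$, a convergent sum with a positive value --- so you obtain only $O(1)$, not $o(1)$, and the asymptotic does not follow. With the correct per-curve bound in terms of $|\Delta_{SF}(E)|$, the average is controlled by Jones's moment estimate \eqref{E23Jones} for $\sum_{a,b}|(4a^3+27b^2)_{SF}|^{-k}$, and that estimate (not a naive prime-by-prime count) is the source of the error term $\frac{1}{A}+\bigl(\log B(\log A)^7/B\bigr)^{k(k+1)/2}$. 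Your deduction of the second assertion from the first, using Theorem \ref{thm-jones} and the triangle inequality, is fine; it is the first assertion that needs Lemmas \ref{whendifferent} and \ref{boundratio}.
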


First, we review several relevant properties of Serre curves; we refer the reader to \cite{jones} for details and proofs.
Let $E$ be a Serre curve and let $\Delta_{SF}(E)$ be the squarefree part of the discriminant of $E$. Note that $\Delta_{SF}(E)$ depends only on $E/\mathbb{Q}$, and not on the particular Weierstrass model. If $E$ is a Serre curve, then $\rho_E(G_\Q) = H_E$ (where $H_E$ is the subgroup of index $2$ defined in Section \ref{serre-curves}).
Also, we have that
\begin{equation}\label{E12Jones}
M_{E} = \left \{
\begin{array}{ll}
2 |\Delta_{SF}| \quad & \textrm{if $\Delta_{SF} = 1$ (mod \, $4$)} \\
4 |\Delta_{SF}| \quad & \textrm{otherwise},
\end{array}\right.
\end{equation}
and the subgroup $H_{E} = \rho_E(G_\Q)$ is the full pre-image of $G_E(M_E)$ under the canonical surjection
$$
\pi : \mathrm{GL}_{2}(\hat{\mathbb{Z}}) \longrightarrow \mathrm{GL}_{2}\left(\mathbb{Z}/ {M}_{E} \mathbb{Z} \right).
$$

Moreover, if $E$ is a Serre curve and $d \mid M_E,$ $d \neq M_E$, then
the natural projection of $G_E(M_E)$ into $\gl(\Z / d \Z)$ is surjective, i.e.,
\begin{eqnarray} \label{largeimage}
G_E(d)  = \gl(\Z / d \Z). \end{eqnarray}

When $E$ is a Serre curve, we can describe $G_E(M_E)$ explicitly by defining,
for each odd prime $p$,  the group homomorphisms
\begin{eqnarray*}
\psi_{p} : \gl (\Z/p\Z) &\rightarrow& \left\{ \pm 1 \right\}\\
g &\mapsto& \left( \frac{\det{g}}{p} \right) .
\end{eqnarray*}
We then define $\psi_{M_E}: \gl(\Z/M_E\Z) \rightarrow \left\{ \pm 1 \right\}$
by $$\psi_{M_E} ( \,\cdot\, ) = \psi_{2^{\nu_p(M_E)}} ( \,\cdot\, )
\prod_{p \parallel M_E} \psi_{p} ( \,\cdot\, ),$$
where the homomorphisms $\psi_{2^{k}}$ for $k =1,2,3$ are as described in \cite{jones}.
Then we have
$$G_E(M_E) = \psi_{M_E}^{-1}(1).$$

In order to prove Proposition \ref{average-serre-curves}, we will need the following pair of lemmas:

\begin{lemma} \label{whendifferent}
Let $E$ be an elliptic curve over $\mathbb{Q}$ which is a Serre curve. Let $n$ be
a squarefree integer such that $n \mid M_E$ and
$G_E(n^2) \neq  \mathrm{GL}_2(\Z / n^2 \Z)$. Then, either $n=M_E, n=M_E/2$ or $n=M_E/4$.
\end{lemma}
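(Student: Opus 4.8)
The plan is to reduce Lemma~\ref{whendifferent} to a statement about one quadratic character of $\mathrm{GL}_2(\hat{\Z})$, and then to an elementary comparison of $p$-adic valuations.

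First I would record the following surjectivity criterion. Since $E$ is a Serre curve, $\rho_E(G_\Q) = H_E$ is the full inverse image of $G_E(M_E) = \psi_{M_E}^{-1}(1)$, and $M_E$ is by definition the least integer admitting such a description; equivalently, $H_E = \ker\Psi$ for a nontrivial continuous character $\Psi\colon \mathrm{GL}_2(\hat{\Z})\to\{\pm1\}$, and $M_E$ is the least $m$ such that $\Psi$ factors through $\mathrm{GL}_2(\Z/m\Z)$. Writing $U_a$ for the kernel of the reduction $\mathrm{GL}_2(\hat{\Z})\to\mathrm{GL}_2(\Z/a\Z)$, surjectivity of the maps $\mathrm{GL}_2(\Z/p^{k+1}\Z)\to\mathrm{GL}_2(\Z/p^{k}\Z)$ gives $U_aU_b = U_{\gcd(a,b)}$, so the set of $m$ through which $\Psi$ factors is precisely the set of multiples of $M_E$. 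Since $\ker\Psi$ has index $2$ and is therefore maximal, for each $m$ either $U_m\subseteq\ker\Psi$ (i.e.\ $M_E\mid m$) or $(\ker\Psi)\,U_m = \mathrm{GL}_2(\hat{\Z})$, which maps onto $\mathrm{GL}_2(\Z/m\Z)$; combined with property~\eqref{serre3} this yields
\[
  G_E(m)\neq\mathrm{GL}_2(\Z/m\Z)\ \Longleftrightarrow\ M_E\mid m,
\]
a strengthening of~\eqref{largeimage}.

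Next I would apply this with $m = n^2$, so that the hypothesis $G_E(n^2)\neq\mathrm{GL}_2(\Z/n^2\Z)$ becomes $M_E\mid n^2$. By~\eqref{E12Jones} we may write $M_E = 2^{\alpha}q$ with $q$ odd and squarefree and $\alpha\in\{1,2,3\}$. As $n$ is squarefree and $n\mid M_E$, we have $n\mid 2q$, say $n = 2^{\varepsilon}d$ with $\varepsilon\in\{0,1\}$ and $d\mid q$. Comparing valuations in $M_E\mid 2^{2\varepsilon}d^{2}$: for every odd prime $p\mid q$ we need $p\mid d$, hence $d=q$ since $d\mid q$ and both are squarefree; and at the prime $2$ we need $2\varepsilon\geq\alpha\geq1$, hence $\varepsilon=1$ (and incidentally $\alpha\leq 2$). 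Thus $n = 2q = M_E/2^{\alpha-1}\in\{M_E,\,M_E/2\}\subseteq\{M_E,\,M_E/2,\,M_E/4\}$, which proves the lemma.

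The hard part is the criterion in the first step, specifically the implication $M_E\nmid m\Rightarrow G_E(m)=\mathrm{GL}_2(\Z/m\Z)$, which goes beyond~\eqref{largeimage} (valid only for divisors of $M_E$). It rests on presenting the Serre-curve image as the kernel of a single quadratic character of conductor exactly $M_E$ and on maximality of an index-$2$ subgroup; the only delicate input is $U_aU_b = U_{\gcd(a,b)}$, i.e.\ surjectivity of the reduction maps among the groups $\mathrm{GL}_2(\Z/p^{k}\Z)$. If one prefers not to use the profinite formulation, the same criterion can be extracted directly from the explicit factorization $\psi_{M_E} = \psi_{2^{\nu_2(M_E)}}\prod_{p\,\parallel\,M_E}\psi_p$ together with~\eqref{largeimage} and the minimality of $M_E$, at the cost of a separate but routine analysis at the prime $2$. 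Everything after the criterion is bookkeeping with valuations.
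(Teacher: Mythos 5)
Your proof is correct, and it takes a genuinely different route from the paper's. The paper argues locally and contrapositively: for squarefree $n\mid M_E$ with $n\neq M_E$ and $(n,M_E/n)=1$, it realizes $G_E(n^2)$ as the projection of $G_E(M_En)$, which by \eqref{serre3} is the full preimage of $G_E(M_E)$; the Chinese Remainder Theorem then shows that $G_E(n^2)$ is the full preimage of $G_E(n)=\mathrm{GL}_2(\Z/n\Z)$ (using \eqref{largeimage}), hence all of $\mathrm{GL}_2(\Z/n^2\Z)$, and a short case analysis on $\nu_2(M_E)$ handles even $n$ by passing to $2n$ or $4n$. You instead establish the global criterion $G_E(m)\neq\mathrm{GL}_2(\Z/m\Z)\iff M_E\mid m$ for every $m$, using that for a Serre curve $\rho_E(G_\Q)$ is the kernel of a quadratic character of conductor exactly $M_E$ and is therefore a maximal normal subgroup; the identity $U_aU_b=U_{\gcd(a,b)}$ and the minimality of $M_E$ are exactly the right inputs, and your verification of both is sound. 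The remaining valuation bookkeeping with \eqref{E12Jones} is mechanical and in fact yields the sharper conclusion $n\in\{M_E,\,M_E/2\}$: the value $n=M_E/4$, which the paper retains for the case $\Delta_{SF}$ even, never actually produces a proper subgroup, since then $\nu_2(M_E)=3>2\geq\nu_2(n^2)$. Your approach buys a cleaner and stronger statement (valid for arbitrary moduli $m$, with no $2$-adic case split) at the cost of working in $\mathrm{GL}_2(\hat{\Z})$ and invoking the character-theoretic description of $H_E$; the paper's argument stays entirely inside finite quotients, and its weaker superset conclusion is all that the application in Proposition \ref{average-serre-curves} requires.
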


\begin{proof}
First, we assume that $E$ is a Serre curve, $n \mid M_E$, $n \neq M_E$ and
$(n, M_E/n)=1$. Under these assumptions, we have $n^2 \mid n M_{E}$ and
$n^2 \neq n M_{E}$.  The subgroup $G_E(n^2)$ of $\mbox{GL}_2(\Z /  n^2 \Z)$ is the
projection of $G_E(M_E n)$ obtained by reducing every matrix in $G_E(M_E n)$ modulo
$n^2$.  In order to prove that $G_E(n^2) = \mbox{GL}_2(\Z /  n^2 \Z)$, we will project $G_E(M_E n)$ into $\mbox{GL}_2(\Z /  n^2 \Z)$.
From
\eqref{serre3}, it follows that $G_E(M_E n)$ is the full inverse image of $G_E(M_E)$, i.e.,
\begin{eqnarray*}
G_E(M_E n) &=& \left\{ \tilde{g} \in \mbox{GL}_2(\Z /  M_E n \Z) : \tilde{g} \equiv g \mod M_E, \;\;
\mbox{for some $g \in G_E(M_E)$} \right\} \\
&=& \left\{ \tilde{g} = (\tilde{g_1}, \tilde{g_2}) \in \mbox{GL}_2(\Z /  n^2 \Z)
\times  \mbox{GL}_2(\Z /  (M_E/n) \Z) : \right. \\
&& \left. \tilde{g_1} \equiv g {\mod {n}}, \;
\tilde{g_2} \equiv g \mod {M_E/n}
\;\;
\mbox{for some $g \in G_E(M_E)$} \right\} ,
\end{eqnarray*}
where the second line follows from the Chinese Remainder Theorem and the fact that, in this case, $(n^2, (M_E/n))=1$ and $\tilde{g}$ is the usual unique lift of $(\tilde{g_1}, \tilde{g_2})$ to
$\mbox{GL}_2(\Z /  M_E n \Z)$.
Since $G_E(n^2)$  is the
projection of $G_E(M_E n)$  into $\mbox{GL}_2(\Z /  n^2 \Z)$, we obtain
\begin{eqnarray} \nonumber
G_E(n^2)
&=& \left\{ \tilde{g_1} \in \mbox{GL}_2(\Z /  n^2 \Z) :
 \tilde{g_1} \equiv g {\mod {n}}
\;\;\mbox{for some $g \in G_E(M_E)$} \right\} \\ \label{GEn2}
&=& \left\{ \tilde{g_1} \in \mbox{GL}_2(\Z /  n^2 \Z) :
 \tilde{g_1} \equiv g {\mod {n}}
\;\;\mbox{for some $g \in G_E(n)$} \right\},
\end{eqnarray}
where the second line follows from our assumptions that $n \mid M_{E}$ and $G_E(n)$ is the projection of $G_E(M_E)$ modulo $n$.
From here, we may conclude that $G_E(n^2)$ is the full inverse image of $G_E(n)$. By (\ref{largeimage}), since $n \mid M_E$ and $n \neq M_E$, we have
 $G_E(n) = \mbox{GL}_2(\Z /  n \Z)$. Therefore,  by \eqref{GEn2}, we have
 \begin{eqnarray*}
 G_E(n^2)& =&  \left\{ \tilde{g_1} \in \mbox{GL}_2(\Z /  n^2 \Z) :
 \tilde{g_1} \equiv g {\mod {n}}
\;\;\mbox{for some $g \in \mbox{GL}_2(\Z /  n \Z)$} \right\}\\
& = &  \mbox{GL}_2(\Z /  n^2 \Z).
 \end{eqnarray*}

If $n$ is an odd squarefree positive integer, then by (\ref{E12Jones}),
we have $(n, M_E/n)=1$, which implies
that $G_E(n^2) = \mbox{GL}_2(\Z /  n^2 \Z)$.
Suppose that the squarefree integer $n$ is even. Then,  $n = 2m$ and $m$ is odd.
If $\nu_2(M_E)=1$, then $(n, M_E/n)=1$, and $G_E(n^2) = \mbox{GL}_2(\Z /  n^2 \Z)$.
If $\nu_2(M_E)=2$, then $(2n, M_E/2n)=1$. If $2n \neq M_E$, we have that
$G_E((2n)^2) = \mbox{GL}_2(\Z /  (2n)^2 \Z)$
which, by projection into $\mbox{GL}_2(\Z /  n^2 \Z)$, implies that
$G_E(n^2) = \mbox{GL}_2(\Z /  n^2 \Z)$.
Similarly, if $\nu_2(M_E)=3$, then $(4n, M_E/4n)=1$. If $4n \neq M_E$, we have that
$G_E((4n)^2) = \mbox{GL}_2(\Z /  (4n)^2 \Z),$
which implies that
$G_E(n^2) = \mbox{GL}_2(\Z /  n^2 \Z)$.

Therefore the only cases where  $G_E(n^2)$ may not equal $\mbox{GL}_2(\Z /  n^2 \Z)$ are those listed in the statement of our lemma.
\end{proof}


\begin{lemma} \label{boundratio}
Let $f(x,y)$ be any squarefree non-constant polynomial in $\Z[x,y]$, and let $E$ be a Serre curve.
Let $n$ be a squarefree integer in  $ \left\{ M_E, M_E/2, M_E/4 \right\} \cap \Z$.
Then for any $\varepsilon > 0$, we have
\begin{eqnarray} \label{upperbounds1}
\frac{|C_{E,f}(n^2)|}{|G_E(n^2)|} &\ll&
\frac{|C_{f}(n^2)|}{|\mathrm{GL}_2(\Z / n^2 \Z)|} \ll \frac{1}{M_E^{2 - \varepsilon}} .
\end{eqnarray}
\end{lemma}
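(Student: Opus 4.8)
\emph{Proof sketch.} The plan is to derive both inequalities essentially formally, from the inclusion of matrix sets, the index-$2$ property of Serre curves, and Lemma~\ref{keylemma}.

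For the first inequality, observe that the definitions \eqref{def-cf} and \eqref{def-cef}, together with $G_E(n^2)\subseteq\mathrm{GL}_2(\Z/n^2\Z)$, give $C_{E,f}(n^2)=C_f(n^2)\cap G_E(n^2)$, so in particular $|C_{E,f}(n^2)|\le|C_f(n^2)|$. It then remains only to bound the index $[\mathrm{GL}_2(\Z/n^2\Z):G_E(n^2)]$ by an absolute constant, and this is exactly where the Serre-curve hypothesis is used: since $\rho_E(G_\Q)=H_E$ has index $2$ in $\mathrm{GL}_2(\hat{\Z})$ (Section~\ref{serre-curves}) and the reduction $\pi_{n^2}\colon\mathrm{GL}_2(\hat{\Z})\to\mathrm{GL}_2(\Z/n^2\Z)$ is surjective, the image $G_E(n^2)=\pi_{n^2}(H_E)$ has index dividing $2$. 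Hence $|G_E(n^2)|\ge\frac{1}{2}|\mathrm{GL}_2(\Z/n^2\Z)|$, and combining the two bounds yields
$$\frac{|C_{E,f}(n^2)|}{|G_E(n^2)|}\ \le\ 2\,\frac{|C_f(n^2)|}{|\mathrm{GL}_2(\Z/n^2\Z)|}.$$

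For the second inequality I would invoke Lemma~\ref{keylemma}, which applies because $n$ is squarefree by hypothesis: it gives $|C_f(n^2)|/|\mathrm{GL}_2(\Z/n^2\Z)|\ll_f n^{-(2-\varepsilon)}$. Since $n\in\{M_E,M_E/2,M_E/4\}$ we have $n\ge M_E/4$, so $n^{-(2-\varepsilon)}\le 4^{2-\varepsilon}\,M_E^{-(2-\varepsilon)}\ll M_E^{-(2-\varepsilon)}$, which completes the proof. The argument is short and presents no genuine obstacle; the one point that must be watched is that Lemma~\ref{keylemma} needs a squarefree modulus, which is why the statement is phrased only for the squarefree integers among $M_E,M_E/2,M_E/4$ — in the application inside Proposition~\ref{average-serre-curves} this hypothesis will be supplied by Lemma~\ref{whendifferent} combined with the formula \eqref{E12Jones} for $M_E$.
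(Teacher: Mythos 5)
Your proposal is correct and follows essentially the same route as the paper: the first inequality comes from $C_{E,f}(n^2)\subseteq C_f(n^2)$ together with the index-$2$ bound $|G_E(n^2)|\ge \tfrac12|\mathrm{GL}_2(\Z/n^2\Z)|$ for Serre curves, and the second comes from applying Lemma~\ref{keylemma} to the squarefree modulus $n$ and using $n\ge M_E/4$. The paper states this in two sentences; your write-up just supplies the same details more explicitly.
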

\begin{proof}
The first inequality of \eqref{upperbounds1} follows immediately since
$E$ is a Serre curve, and therefore
$|G_E(n)| \geq |\gl(\Z / n \Z)|/2$ for any $n$.
The
second inequality follows from Lemma \ref{upperbounds-n1} as $M_E$ is not divisible by the square of any odd prime.
\end{proof}

\textbf{Proof of Proposition \ref{average-serre-curves}}.
For $E \in \mathcal{S}(A,B)$,
we have
\begin{eqnarray} \label{difference}
C_{E,f}^{SF} - C_{f}^{SF}  &=& \sum_{\mathrm{GL}_2(\Z/n^2 \Z) \neq G_E(n^2)}  \mu(n) \left( \frac{|C_{E,f}(n^2)|}{|G_E(n^2)|} -
\frac{|C_{f}(n^2)|}{|\gl(\Z / n^2 \Z)|} \right).
\end{eqnarray}
We would like to detect  the squarefree integers $n$ such that $\gl(\Z/n^2\Z) \neq G_E(n^2)$.
If $(n, M_E)=1$ then by  \eqref{serre1}, $n$ is not counted in the sum.
Therefore we only need to consider those values of $n$ where $(n, M_E) \neq 1$, in which case we may write $n = n_1 n_2$ with $(n_1, M_E)=1$ and $n_2 \mid M_E$.
(Recall that $n$ is squarefree.)
Using the property given in \eqref{serre2}, we obtain
$$
G_E(n^2) = \gl(\Z / n_1^2 \Z) \times G_E(n_2^2),
$$
and
$$
\frac{|C_{E,f}(n^2)|}{|G_E(n^2)|} = \frac{|C_{f}(n_1^2)|}{|\gl(\Z / n_1^2 \Z)|}  \frac{|C_{E,f}(n_2^2)|}{|G_E(n_2^2)|}.
$$
Lemma \ref{whendifferent} gives us a set of conditions for the values of $M_E$ and $\Delta_{SF}$ that $|G_E(n_2^2)| \neq \mathrm{GL}_2(\Z/n^2 \Z)$
 can occur for squarefree values of $n$ when $E$ is a Serre curve defined over $\Q$. We will now describe how to bound $C_{E,f}^{SF} - C_{f}^{SF}$ in each of these instances.

In the case where $M_E = 2 | \Delta_{SF} |$ with $\Delta_{SF} \equiv 1 \mod 4$, we can use Lemma \ref{whendifferent} together with \eqref{difference} to show that
\begin{eqnarray} \label{case1}
C_{E,f}^{SF} - C_{f}^{SF}  &\ll& \sum_{\substack{\mu(n) \neq 0 \\ n = M_E n_1}}  \frac{|C_{E,f}(M_E^2)|}{|G_E(M_E^2)|}
\frac{|C_{f}(n_1^2)|}{|\gl(\Z / n_1^2 \Z)|}
+
\frac{|C_{f}(n^2)|}{|\gl(\Z / n^2 \Z)|}.
\end{eqnarray}
Similarly, when
$M_E = 4 | \Delta_{SF} |$ with $\Delta_{SF}$ odd, we have
\begin{eqnarray} \label{case2}
C_{E,f}^{SF} - C_{f}^{SF}  &\ll& \sum_{\substack{ \mu(n) \neq 0 \\ n = (M_E/2) n_1}}  \frac{|C_{E,f}(M_E^2/4)|}{|G_E(M_E^2/4)|} \frac{|C_{f}(n_1^2)|}{|\gl(\Z / n_1^2 \Z)|}  +
\frac{|C_{f}(n^2)|}{|\gl(\Z / n^2 \Z)|}
\end{eqnarray}
and  when
$M_E = 4 | \Delta_{SF} |$ with $\Delta_{SF}$ even, we have
\begin{eqnarray} \label{case3}
C_{E,f}^{SF} - C_{f}^{SF}  &\ll& \sum_{\substack{ \mu(n) \neq 0 \\ n = (M_E/4) n_1}}  \frac{|C_{E,f}(M_E^2/16)|}{|G_E(M_E^2/16)|} \frac{|C_{f}(n_1^2)|}{|\gl(\Z / n_1^2 \Z)|}  +
\frac{|C_{f}(n^2)|}{|\gl(\Z / n^2 \Z)|}.
\end{eqnarray}
In all other cases, we have
$$
C_{E,f}^{SF} - C_{f}^{SF} = 0.
$$

Using Lemma \ref{boundratio} in \eqref{case1},  \eqref{case2} and \eqref{case3}, we obtain
\begin{eqnarray*}
C_{E,f}^{SF} - C_{f}^{SF}  &\ll&  \frac{1}{M_E^{2-\varepsilon}} \sum_{n_1} \frac{1}{n_1^{2-\varepsilon}}
\ll \frac{1}{M_E^{2-\varepsilon}}.
\end{eqnarray*}




In order to complete our argument, we will need the following result from \cite{jones}: for any positive integer $k$,
\begin{equation}\label{E23Jones}
\frac{1}{4 AB} \sum_{\substack{|a| \leq A\\ |b|\leq B\\ 4a^3+27b^2 \neq 0}}
\frac{1}{|(4a^3 + 27b^2)_{SF}|^k} \ll \frac{1}{A} + \left( \frac{ \log{B} (\log{A})^7}
{B} \right)^{k(k+1)/2}.\end{equation}
From here, we may conclude that
\begin{eqnarray*}
\frac{1}{|\mathcal{C}(A , B)|} \sum_{E \in \mathcal{S}(A , B)} C_{E,f}^{ SF}
& = & \frac{|\mathcal{S}(A,B)|}{|\mathcal{C}(A , B)|} C_{f}^{SF} +
O \left( \frac{1}{A} + \left( \frac{ \log{B} (\log{A})^7}
{B} \right)^{3-\varepsilon} \right)\\
&\sim& C_{f}^{SF},
\end{eqnarray*}
since almost all elliptic curves are Serre curves (see  \cite{jonesthesis}); i.e., as $A,B \rightarrow \infty,$
\begin{equation*}
 \frac{|\mathcal{S}(A,B)|}{|\mathcal{C}(A , B)|} \sim 1.
\end{equation*}
This completes our proof of Proposition \ref{average-serre-curves}. \qed

Theorem \ref{thmaveragec} then follows from Proposition  \ref{lemma-nonserre} and Proposition
\ref{average-serre-curves}.

\textit{Acknowledgements.} This paper came out of work that began at the \textit{Women In Numbers 2} workshop. We would like to thank the \textit{WIN 2} organizers and the Banff International Research Station for providing us with the opportunity to collaborate. We would also like to express our gratitude to Min Lee, who participated in the early stages of this research; her notes were very
helpful in the preparation of this manuscript. Finally, we would like to thank Nathan Jones and the anonymous referee for their careful reading of the paper and for providing helpful comments.

\providecommand{\bysame}{\leavevmode\hbox
to3em{\hrulefill}\thinspace}
\providecommand{\MR}{\relax\ifhmode\unskip\space\fi MR }
\providecommand{\nMRhref}[2]{%
  \href{http://www.ams.org/mathscinet-getitem?mr=#1}{#2}
} \providecommand{\href}[2]{#2}

\end{document}